\newtheorem{thm}{Theorem}
\newtheorem{lem}{Lemma}[section]
\newtheorem{prop}[lem]{Proposition}
\newtheorem{rem}{Remark}
\newcommand{\R}{\mathbb{R}}
\newcommand{\N}{\mathbb{N}}
\begin{document}
\title{Entire downward solitons to the scalar curvature flow in Minkowski space}
\author{Pierre Bayard}
\email{bayard@ciencias.unam.mx}
\address{Facultad de Ciencias, Universidad Nacional Aut\'onoma de M\'exico
\\Av. Universidad 3000, Circuito Exterior S/N
\\Delegaci\'on Coyoac\'an, C.P. 04510, Ciudad Universitaria, CDMX, M\'exico}
\maketitle

\begin{abstract}
Existence and uniqueness in Minkowski space of entire downward translating solitons with prescribed values at infinity for a scalar curvature flow equation. The radial case translates into an ordinary differential equation and the general case into a fully non-linear elliptic PDE on $\R^n.$
\\
\\R\'ESUM\'E. Existence et unicit\'e dans l'espace de Minkowski des solitons entiers de valeurs prescrites \`a l'infini pour une \'equation de flot de courbure scalaire. Le cas radial se traduit en une \'equation diff\'erentielle ordinaire et le cas g\'eneral en une EDP elliptique totalement non-lin\'eaire sur $\R^n.$
\end{abstract}

\noindent
{\it Keywords: entire soliton, Minkowski space, scalar curvature}.\\\\
\noindent
{\it 2020 Mathematics Subject Classification:} 35J60, 53C50, 53E10.

\date{}
\maketitle\pagenumbering{arabic}
\section{Introduction}
The Minkowski space $\R^{n,1}$ is $\R^{n+1}$ with the metric
$$dx_1^2+\cdots +dx_n^2-dx_{n+1}^2.$$
A smooth function $u:\R^n\rightarrow\R$ is spacelike if $|Du(x)|<1$ for all $x\in\R^n.$ This alternatively means that its graph $M$ is a spacelike hypersurface of $\R^{n,1}.$ In the natural chart $(x_1,\ldots,x_n),$ the metric of $M$ is
$$g_{ij}=\delta_{ij}-\partial_i u\ \partial_ju$$
and the second fundamental form is
$$h_{ij}=\frac{1}{\sqrt{1-|Du|^2}}\ \partial^2_{ij}u.$$
Since the inverse of the metric is
$$g^{ij}=\delta_{ij}+\frac{\partial_iu\ \partial_j u}{1-|Du|^2}$$
the shape operator of $M$ is given by 
\begin{equation}\label{expr shape op}
h^i_j=\frac{1}{\sqrt{1-|Du|^2}}\sum_{k=1}^n\left(\delta_{ik}+\frac{\partial_i u\ \partial_k u}{1-|Du|^2}\right)\partial^2_{kj}u.
\end{equation}
Let us denote by $\mathcal{H}_k[u]$ the (normalized) $k^{th}$ elementary symmetric function of the principal curvatures $\lambda_1,\ldots,\lambda_n$ of the graph of $u,$
$$\mathcal{H}_k[u]=\frac{k!(n-k)!}{n!}\ \sigma_k(\lambda_1,\ldots,\lambda_n).$$ 
We are interested in the scalar curvature $S[u]$ which is linked to $\mathcal{H}_2[u]$ by
$$S[u]=-n(n-1)\ \mathcal{H}_2[u],$$
and more specifically in the entire scalar curvature flow
\begin{equation}\label{entire flow}
-\frac{\dot u}{\sqrt{1-|Du|^2}}+\mathcal{H}_2[u]^{\frac{1}{2}}=H
\end{equation}
where $H>0$ is a given function. This equation expresses the evolution of a spacelike hypersurface with normal velocity given by the square root of its scalar curvature (up to a negative multiplicative constant), with a forcing term. In the paper we will suppose $n>2$ since the case $n=2$ is very different from the PDE point of view: it corresponds to the evolution of a surface by its Gauss curvature, which translates into an equation of Monge-Amp\`ere type. It is well known that Equation (\ref{entire flow}) is parabolic on the set of \emph{admissible functions}
$$\{u:\R^n\rightarrow\R\mbox{ of class }C^2\mbox{ such that }|Du|<1,\ \mathcal{H}_1[u]>0,\ \mathcal{H}_2[u]>0\mbox{ on }\R^n\},$$ 
and that the Mac-Laurin inequality
\begin{equation}\label{mac laurin}
\mathcal{H}_1[u]\geq \mathcal{H}_2[u]^{\frac{1}{2}}
\end{equation}
holds on that set. \emph{Translating solitons} are solutions of (\ref{entire flow}) moving by vertical translations, i.e. such that $\dot u=a$ for some constant $a\in\R:$ they are solutions of the so-called \emph{soliton equation}
\begin{equation}\label{soliton equation a H}
\mathcal{H}_2[u]^{\frac{1}{2}}=H+\frac{a}{\sqrt{1-|Du|^2}}.
\end{equation}
Conversely, a solution $u:\R^n\rightarrow\R$ of (\ref{soliton equation a H}) gives a solution of the flow (\ref{entire flow}) by setting 
$$u(x,t):=u(x)+at.$$
Entire translating solitons thus readily furnish natural examples of solutions for the entire parabolic problem (\ref{entire flow}). It is moreover plausible that an entire solution of (\ref{entire flow}) necessarily converges to a translating soliton if the initial hypersurface has bounded curvature and is such that $\sup_{\R^n}|Du|<1:$ this was proved for the mean curvature flow in \cite{Aa}, if $H$ is a constant. So the study of the soliton equation (\ref{soliton equation a H}) is certainly important for the study of the entire flow (\ref{entire flow}).  We are especially interested here in the existence and uniqueness of entire solutions of (\ref{soliton equation a H}), and in their asymptotic properties. Since we are seeking for admissible solutions (in order to apply elliptic methods), we need to suppose
$$H+a>0.$$ 
Let us note that for a solution $u$ of (\ref{soliton equation a H}) and $\lambda>0,$ the function $u_{\lambda}(x):=\lambda\ u(x/\lambda)$ satisfies
$$\mathcal{H}_2[u_{\lambda}]^{\frac{1}{2}}=H_\lambda+\frac{a/\lambda}{\sqrt{1-|Du_{\lambda}|^2}}$$
where $H_{\lambda}(x)=1/\lambda\ H(x/\lambda).$ The study of (\ref{soliton equation a H}) thus reduces to the three cases $a=1,$ -1, or 0. The case $a=0$ corresponds to the prescribed scalar curvature equation and was studied in \cite{Bay1}-\cite{Bay4} and \cite{U}. We focus here on the case $a=-1$ which is \emph{the downward soliton equation}
\begin{equation}\label{back soliton equation}
\mathcal{H}_2[u]^{\frac{1}{2}}=H-\frac{1}{\sqrt{1-|Du|^2}}
\end{equation}
with $H>1.$ We first prove that if $H$ is a radial function the downward soliton equation admits an entire radial solution, unique up to the addition of a constant; we will moreover specify its asymptotic behavior: 
\begin{thm}\label{thm radial}
Let $C>1$ be a constant and let $H:[0,+\infty)\rightarrow\R$ be a non-decreasing function such that $x\in\R^n\mapsto H(|x|)\in\R$ is smooth,
\begin{equation}
\inf_{[0,+\infty)}H\ >1\hspace{.5cm}\mbox{and}\hspace{.5cm}H(r)\rightarrow_{r\rightarrow+\infty} C.
\end{equation}
Then there exists a radial admissible function $u:\R^n\rightarrow\R$ solution of (\ref{back soliton equation}) in $\R^n.$ It is strictly convex and such that
$$|Du(x)|\rightarrow_{|x|\rightarrow+\infty}\sqrt{1-\frac{1}{C^2}}.$$
Assuming moreover that 
\begin{equation}\label{asympt H}
H(r)=_{r\rightarrow+\infty}C\left(1-O\left(\frac{1}{r^2}\right)\right)
\end{equation}
it satisfies
\begin{equation}\label{asymptot u}
u(x)= \sqrt{1-1/C^2}\ |x|-\frac{1}{C^2}\sqrt{\frac{n-2}{n}}\log|x|+c_0+o(1)
\end{equation}
for some constant $c_0\in\R.$ In particular, the radial admissible solution is unique up to the addition of a constant. 
\end{thm}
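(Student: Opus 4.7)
The plan is to recast (\ref{back soliton equation}) as a scalar first-order ODE under the radial ansatz $u(x)=v(|x|)$. Setting $p(r)=v'(r)$, the principal curvatures are the radial one $\lambda_1=p'/(1-p^2)^{3/2}$ and the tangential one $\lambda_2=p/(r\sqrt{1-p^2})$ of multiplicity $n-1$; introducing $W:=1/(1-p^2)$, a direct computation yields
\begin{equation*}
\mathcal{H}_2[u]=\frac{W'}{nr}+\frac{n-2}{nr^2}(W-1),
\end{equation*}
so that (\ref{back soliton equation}) reads $rW'+(n-2)(W-1)=nr^2(H(r)-\sqrt W)^2$. Multiplying by $r^{n-3}$, valid since $n>2$, and integrating with the natural initial condition $W(0)=1$ (equivalent to $v'(0)=0$), produces the integral equation
\begin{equation*}
W(r)=1+\frac{n}{r^{n-2}}\int_0^r s^{n-1}\bigl(H(s)-\sqrt{W(s)}\bigr)^2\,ds. \qquad (\ast)
\end{equation*}

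Next, a local continuous solution of $(\ast)$ on some interval $[0,\varepsilon]$ is obtained by a Picard fixed-point argument, the weight $\frac{n}{r^{n-2}}\int_0^r s^{n-1}\,ds=r^2$ providing contractivity for $r$ small. Admissibility requires $H-\sqrt W\ge 0$, which combined with $H\le C$ gives the a priori bound $W(r)\le C^2$ everywhere; this keeps $p=\sqrt{1-1/W}$ bounded away from $1$ and allows continuation to all of $[0,+\infty)$. The value $v''(0)=H(0)-1>0$ is read off the Taylor expansion of $(\ast)$ at the origin, and from the differential form of $(\ast)$ one deduces $p>0$ and $p'>0$ throughout, so $v$ is strictly convex and $\mathcal{H}_1>0$. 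Boundedness of $W$ combined with an analysis of the ODE $rW'+(n-2)(W-1)=nr^2(H-\sqrt W)^2$ at extremal points (at which $W-1=nr^2(H-\sqrt W)^2/(n-2)$, forcing $H-\sqrt W=O(1/r)$) rules out $\liminf_{r\to\infty} W<C^2$ and gives $W(r)\to C^2$, i.e.\ $|Dv(r)|\to\sqrt{1-1/C^2}$.

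For the refined asymptotic under (\ref{asympt H}), set $a:=\sqrt{1-1/C^2}$ and write $p(r)=a-\delta(r)$ with $\delta=o(1)$. Substituting into the ODE and expanding to leading order in $\delta$ and in $H-C=O(1/r^2)$, the $O(1/r^2)$ balance
\begin{equation*}
\frac{n-2}{nr^2}(W-1)\ \sim\ (H-\sqrt W)^2
\end{equation*}
singles out the equilibrium size $\delta\sim b/r$ with $b:=\frac{1}{C^2}\sqrt{(n-2)/n}$. The correction $\eta:=\delta-b/r$ then satisfies a linearised ODE of the form $\eta'+\mu\eta=O(1/r^2)$ with $\mu:=a\sqrt{n(n-2)}>0$, and the integrating factor $e^{\mu r}$ yields $\eta(r)=O(1/r^2)$. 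Consequently $p(r)=a-b/r+O(1/r^2)$, and a single integration gives (\ref{asymptot u}) together with the existence of a finite constant $c_0$. Uniqueness of $v$ up to an additive constant is immediate from the uniqueness of the solution of $(\ast)$.

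The hardest step will be this last asymptotic analysis: one must carefully identify and control all the error terms arising in the expansion of $(H-\sqrt W)^2$ around the equilibrium $W=C^2$, in particular the nonlinear contribution $r\eta^2$, in order to verify that the forcing is genuinely $O(1/r^2)$. The positivity of the damping coefficient $\mu$, which is essentially a stable-manifold condition at the fixed point $W=C^2$ of the ODE, is then what converts this forcing into the announced decay of $\eta$ through the exponential integrating factor, and ultimately into the convergence of the logarithmically renormalised quantity $v(r)-ar+b\log r$ to $c_0$.
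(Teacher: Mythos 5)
Your reduction is correct and equivalent to the paper's: with $W=1/(1-p^2)$ the equation becomes $rW'+(n-2)(W-1)=nr^2(H-\sqrt W)^2$, which is the paper's equation (\ref{eqn v}) in the variable $v=1-1/W=p^2$, and your Picard argument on the integral form $(\ast)$ at $r=0$ is a reasonable substitute for the paper's $\varepsilon$-regularization; deducing uniqueness of the radial profile from uniqueness for $(\ast)$ is also fine. But two essential steps are missing. First, the bound $W\le C^2$ does not follow from ``admissibility requires $H-\sqrt W\ge0$'': you are constructing a solution of the \emph{squared} equation, and nothing in $(\ast)$ forces it to stay in the region $\sqrt W<H$; if it left that region it would no longer solve (\ref{back soliton equation}), and the ODE then gives $rW'\approx nr^2W$, so $W$ escapes to infinity and spacelikeness is lost. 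You must prove $\sqrt W<H$ by a first-crossing argument: at the first $r_0$ with $W(r_0)=H(r_0)^2$ the ODE yields $r_0W'(r_0)=-(n-2)(H(r_0)^2-1)<0$, while crossing the barrier from below forces $W'(r_0)\ge (H^2)'(r_0)\ge 0$ --- this is exactly where the hypothesis that $H$ is non-decreasing enters, and it is the content of the paper's Lemma \ref{lem v eps} (inequality (\ref{ineq veps})). The same criticism applies to ``one deduces $p'>0$ throughout'': the right-hand side of $rW'=-(n-2)(W-1)+nr^2(H-\sqrt W)^2$ has no evident sign, and the paper proves monotonicity by a separate argument (excluding $W'\le0$ on a tail, then comparing the critical-point relation $(n-2)(W-1)=nr^2(H-\sqrt W)^2$ at two consecutive zeros of $W'$), again using $H'\ge0$, before upgrading to $W'>0$. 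Without this you get neither strict convexity nor a clean proof of $W\to C^2$ (your ``extremal point'' remark presupposes control between extrema that monotonicity is there to supply).

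Second, the refined asymptotics are not closed. Writing $p=a-\delta$ and $\eta=\delta-b/r$, the exact equation for $\eta$ contains the nonlinear term $A(r)\,r\eta^2$ with $A\to A_0<0$; the claim $\eta'+\mu\eta=O(1/r^2)$ is only legitimate once you know $r\eta\to0$, i.e.\ that $z:=r(a-p)$ actually converges to $b$, and the heuristic balance merely identifies $b$ as the candidate equilibrium. You flag this (``the hardest step'') but supply no mechanism to handle it. The missing ingredient is the intermediate Riccati step: $z$ satisfies $z'=A(r)z^2+B(r)$ with $A\to A_0<0$, $B\to B_0>0$, and positive solutions of such equations converge to $\sqrt{-B_0/A_0}=b$ (the paper's Proposition \ref{prop 2 app 1}, proved by comparison with constant-coefficient Riccati equations; note $z>0$ precisely because $W<C^2$ gives $p<a$). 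With $z\to b$ in hand, $A\,r\eta^2=(Ar\eta)\,\eta$ is an $o(1)$ perturbation of the damping coefficient $\mu=a\sqrt{n(n-2)}$ (your value agrees with the paper's), your integrating-factor argument applies (the paper runs it as $\alpha'+a\alpha=b$ for $\alpha=r^2\eta$, via Proposition \ref{prop 3 app 1}), and (\ref{asymptot u}) follows by integration. As written, however, both the admissibility barrier and this Riccati step are genuine gaps, not routine verifications.
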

We then consider the Dirichlet problem for the downward soliton equation, with constant prescribed curvature and boundary values, in uniformly convex domains:
\begin{thm}\label{thm dp}
Let $\Omega\subset\R^n$ be a bounded open set with smooth boundary, uniformly convex, and let $C>1$ and $u_0\in\R$ be constant. The Dirichlet problem
\begin{equation}\label{dp 0b}\left\{\begin{array}{rcl}
{\mathcal{H}_2[u]}^{\frac{1}{2}}&=&\displaystyle{C-\frac{1}{\sqrt{1-|Du|^2}}}\hspace{.3cm}\mbox{  in  }\Omega\\
\\u&=&u_0\hspace{.3cm}\mbox{  on  }\partial\Omega
\end{array}\right.
\end{equation}
has a unique admissible solution.
\end{thm}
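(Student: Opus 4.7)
The plan is to prove Theorem \ref{thm dp} via the method of continuity supported by uniform $C^{2,\alpha}$ a priori estimates, with uniqueness coming from a comparison principle for admissible solutions. I would introduce the one-parameter family
$$\mathcal{H}_2[u]^{\frac{1}{2}}=C-\frac{t}{\sqrt{1-|Du|^2}}\ \mbox{ in }\Omega,\qquad u=u_0\ \mbox{ on }\partial\Omega,\qquad t\in[0,1],$$
so that $t=0$ is the constant prescribed scalar curvature problem solved in \cite{Bay1}-\cite{Bay4} and \cite{U}, while $t=1$ is the problem of interest. On the admissible cone, the operator $F_t[u]:=\mathcal{H}_2[u]^{\frac{1}{2}}+t(1-|Du|^2)^{-1/2}$ is strictly elliptic and concave in $D^2u$; its linearization carries no zeroth-order term, so by Schauder theory and the implicit function theorem in Hölder spaces, the set $I\subset[0,1]$ of $t$ admitting an admissible solution is open.

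Closedness of $I$ reduces to uniform a priori bounds for admissible solutions $u_t$. The $C^0$ bound is essentially immediate: at an interior maximum, $Du_t=0$ and $D^2u_t\leq 0$ would force $\mathcal{H}_1[u_t]\leq 0$, violating admissibility, hence $u_t\leq u_0$; the spacelike property gives $u_t\geq u_0-\mathrm{diam}(\Omega)$. For the gradient estimate, the equation together with admissibility already yields $|Du_t|^2\leq 1-t^2/C^2$, but this must be improved to a uniform bound $|Du_t|\leq 1-\delta_0$ with $\delta_0>0$ independent of $t$. I would obtain the boundary part by sandwiching $u_t$ between two barriers built from hyperboloids of constant scalar curvature adapted to the uniform convexity of $\partial\Omega$, and then pass to the interior through the standard maximum-principle argument for $\sigma_2$-type equations in Minkowski space.

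The main obstacle is the $C^2$ estimate, which I would handle in the familiar two stages. First, a boundary estimate: the double tangential and mixed tangential-normal second derivatives on $\partial\Omega$ come from differentiating the boundary condition together with barriers based on the signed distance to $\partial\Omega$, exploiting the uniform convexity of $\Omega$; the double normal component is then recovered from the equation itself using the already-controlled part of the shape operator and the strict positive lower bound on $\mathcal{H}_2[u_t]$ coming from the improved gradient estimate. Second, a global interior estimate is obtained by applying the maximum principle to an auxiliary function of the form $W=\log\lambda_{\max}+\varphi(|Du_t|^2)+\psi(u_t)$ with suitably chosen $\varphi,\psi$, reducing interior control of $D^2u_t$ to its boundary control. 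The gradient-dependent right-hand side $t(1-|Du|^2)^{-1/2}$ generates extra terms in these computations, but they carry favorable signs thanks to the uniform gradient bound. Once $|D^2u_t|$ is bounded and admissibility is preserved, concavity of $\mathcal{H}_2^{\frac{1}{2}}$ on the admissible cone and Evans-Krylov yield the uniform $C^{2,\alpha}$-estimate, closing the continuity method.

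For uniqueness, if $u$ and $v$ are two admissible solutions with $u=v$ on $\partial\Omega$, writing $F_1[u]-F_1[v]=0$ as the action of a linear second-order elliptic operator without zeroth-order term on $u-v$ (via the usual integrated-linearization trick along a segment staying in the admissible cone) and applying the strong maximum principle forces $u\equiv v$.
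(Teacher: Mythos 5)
Your overall skeleton (continuity method in the parameter $t=\sigma$, solvability at $\sigma=0$ from \cite{Bay1,Bay2,U}, openness by linearization, closedness by uniform $C^{2,\alpha}$, gradient and $\mathcal{H}_2$-lower bounds, interior maximum principle for $\nu=(1-|Du|^2)^{-1/2}$, uniqueness by comparison) is exactly the paper's. The genuine gap is in the boundary gradient estimate, which is where the whole proof lives. What closedness requires is not just $\sup_{\overline\Omega}|Du|<1$ but the quantitative bound $\nu\leq C'/\sigma$ with a controlled constant $C'<C$, which is equivalent to a uniform positive lower bound $\mathcal{H}_2[u]\geq\delta$; without it the right-hand side $C-\sigma\nu$ may degenerate to $0$ and admissibility (hence nondegenerate ellipticity) is lost in the limit. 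Your proposed barriers "built from hyperboloids of constant scalar curvature" cannot deliver this. A hyperboloid $v(x)=\sqrt{K^{-2}+|x-x_0^-|^2}+\mathrm{const}$ satisfies $\mathcal{H}_2[v]^{1/2}=K$ and $(1-|Dv|^2)^{-1/2}=\sqrt{1+K^2|x-x_0^-|^2}$, so it is a subsolution of the $\sigma$-problem on $\Omega$ only if $K\geq C-\sigma\sqrt{1+K^2|x-x_0^-|^2}$ there, which forces $K\geq C-\sigma$ wherever $|Dv|$ can be small; the resulting slope bound at the touching point of the exterior tangent ball of radius $R^-$ is then $\nu(x_0)\leq\sqrt{1+(C-\sigma)^2(R^-)^2}$, and at $\sigma=1$ this exceeds $C$ as soon as $(R^-)^2\geq (C+1)/(C-1)$ — i.e. it is weaker than the trivial bound $\nu<C/\sigma$ and gives no lower bound on $\mathcal{H}_2[u]$ except for small domains (tuning $K$ or pushing the center far away does not remove the obstruction). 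The paper's essential ingredient is to take as barriers the \emph{radial solitons of the same family of equations}, built from the ODE solutions $y_\sigma$ of Section \ref{section radial soliton} (Lemma \ref{lem def y sigma} and the lemma following it): the ODE estimate (\ref{ineq y sigma R}), $y_\sigma(R)\leq\sqrt{1-\sigma^2/(C-c_0)^2}$ with $c_0=c_0(n,C,R)>0$, says precisely that the soliton's slope at finite radius stays a controlled amount below its asymptotic value $\sqrt{1-\sigma^2/C^2}$, and this is what yields $\nu\leq (C-c_0)/\sigma$. Replacing these soliton barriers by constant-curvature hyperboloids loses exactly this mechanism, so your closedness step fails as written.

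A secondary, fixable weakness: in the $C^2$ estimate you assert that the extra terms coming from the gradient-dependent right-hand side "carry favorable signs". They do not: differentiating $\psi=C-\sigma\nu$ twice produces the third-order term $\sigma t^kh_{11;k}$ and the term $-\sigma\nu\lambda_1^2$, neither of which is signed in your favor. The paper controls them by using the critical-point equation of Urbas's test function $\widetilde W=\eta^\beta h_{ij}\xi_i\xi_j$, which gives $|h_{11;i}|\leq c\beta\lambda_1$ and hence $\psi_{11}/\lambda_1\geq -c(\beta+\lambda_1)$ (Lemma \ref{lem est W tilde}); this correction is then absorbed into Urbas's scheme, and the boundary $C^2$ estimate is taken from \cite{Bay2} after checking it tolerates gradient dependence of $f$ once $f$ has a positive controlled lower bound — which again hinges on the soliton-barrier gradient estimate above.
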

We finally prove that the downward soliton equation with constant curvature $C>1$ admits entire admissible solutions, with prescribed asymptotic values at infinity. Let us set
$$\widetilde{C}=\sqrt{1-1/C^2},$$
consider the sphere 
$$S_{\widetilde{C}}=\{x\in\R^n:\ |x|=\widetilde{C}\}$$ 
and fix $f:S_{\widetilde{C}}\rightarrow\R$ of class $C^2.$
\begin{thm}\label{main thm}
There exists a unique admissible solution $u:\R^n\rightarrow\R$ of 
\begin{equation}\label{soliton equation C}
\mathcal{H}_2[u]^{\frac{1}{2}}=C-\frac{1}{\sqrt{1-|Du|^2}}
\end{equation}
such that
\begin{equation}\label{asymptotic u f}
u(x)=_{|x|\rightarrow+\infty}\widetilde{C}|x|-\frac{1}{C^2}\sqrt{\frac{n-2}{n}}\log|x|+f\left(\widetilde{C}\frac{x}{|x|}\right)+o(1).
\end{equation}
\end{thm}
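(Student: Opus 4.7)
The plan is to exhaust $\R^n$ by balls and use barriers built from the radial model of Theorem \ref{thm radial}. First I would apply Theorem \ref{thm radial} with $H\equiv C$ to produce a radial admissible solution $\bar u$ of (\ref{soliton equation C}) satisfying $\bar u(x)=\widetilde{C}|x|-\frac{1}{C^2}\sqrt{\frac{n-2}{n}}\log|x|+o(1)$, after normalizing the constant $c_0$ to zero. Fix a smooth extension $\widetilde f:\R^n\to\R$ of the map $x\mapsto f(\widetilde{C}\,x/|x|)$, which is defined on $\R^n\setminus\{0\}$. For each large $R$, I would solve the Dirichlet problem for (\ref{soliton equation C}) on $B_R$ with boundary values $\varphi_R=(\bar u+\widetilde f)|_{\partial B_R}$; this requires an extension of Theorem \ref{thm dp} to smooth non-constant boundary data, which goes through because the proof relies on the uniform convexity of the boundary and on existence of a global admissible subsolution, not on constancy of $u_0$, and because for $R$ large $|D(\bar u+\widetilde f)|$ stays strictly below $1$ on $\partial B_R$.

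The functions $\bar u\pm\|f\|_\infty$ are an admissible supersolution and subsolution of (\ref{soliton equation C}) with boundary values dominating $\varphi_R$ on $\partial B_R$, so the comparison principle gives $|u_R-\bar u|\le\|f\|_\infty$ on $B_R$, uniformly in $R$. Combined with the strict convexity of $\bar u$ and $|D\bar u|\to\widetilde{C}<1$, this $C^0$ control propagates to interior gradient and second-order estimates inside the admissible cone, standard for the $\mathcal{H}_2$-type operator, and then to $C^{2,\alpha}$ estimates via Evans--Krylov. A diagonal subsequence yields an entire admissible solution $u$ of (\ref{soliton equation C}).

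The heart of the proof is the precise asymptotic expansion (\ref{asymptotic u f}). Substituting the ansatz $\bar u+\widetilde f$ into the operator, the leading error comes from the tangential derivatives of $\widetilde f$, which enter the shape operator (\ref{expr shape op}) at order $O(|x|^{-1})$, so that $\mathcal{H}_2[\bar u+\widetilde f]^{1/2}-C+1/\sqrt{1-|D(\bar u+\widetilde f)|^2}=O(|x|^{-2})$. One can therefore design small radial corrections $\psi_\varepsilon^\pm(|x|)\to 0$ such that $\bar u+\widetilde f\pm(\psi_\varepsilon^\pm+\varepsilon)$ is a supersolution, respectively a subsolution, of (\ref{soliton equation C}) outside a large ball $B_{R_\varepsilon}$; comparing $u$ to these barriers on $\R^n\setminus B_{R_\varepsilon}$, controlled on $\partial B_{R_\varepsilon}$ by the uniform $C^0$ bound and at infinity by construction, yields $\limsup_{|x|\to\infty}|u(x)-\bar u(x)-\widetilde f(x)|\le C'\varepsilon$, and letting $\varepsilon\to 0$ gives (\ref{asymptotic u f}). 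Uniqueness follows from comparison: if $u_1,u_2$ are two such solutions, then $w:=u_1-u_2$ satisfies a linear elliptic equation obtained by linearizing (\ref{soliton equation C}) along a convex combination, and $w\to 0$ at infinity forces $w\equiv 0$ by the maximum principle at infinity (the sign of the contribution coming from the $-1/\sqrt{1-|Du|^2}$ term is favorable). The main obstacle is this asymptotic analysis: producing barriers matching the prescribed angular profile $f$ to order $o(1)$, rather than merely $O(\|f\|_\infty)$, requires a careful expansion of the nonlinear operator near the asymptotic cone of $\bar u$ and a suitable choice of the radial corrector $\psi_\varepsilon^\pm$ that compensates the $O(|x|^{-2})$ error without destroying admissibility.
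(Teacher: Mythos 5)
Your overall skeleton (radial soliton from Theorem \ref{thm radial}, exhaustion by Dirichlet problems, interior estimates, diagonal limit, maximum-principle uniqueness) parallels the paper, but the step that carries the whole theorem --- the $o(1)$ asymptotics (\ref{asymptotic u f}) --- has a genuine gap. You prove only the crude bound $|u_R-\bar u|\le\|f\|_\infty$, and then try to recover the precise expansion a posteriori by comparing the limit $u$ with $\bar u+\widetilde f\pm(\psi_\varepsilon^\pm+\varepsilon)$ on the exterior region $\R^n\setminus B_{R_\varepsilon}$. This comparison is not anchored anywhere: on the inner boundary $\partial B_{R_\varepsilon}$ the only available information is $|u-\bar u|\le\|f\|_\infty$, which does not give $u\le \bar u+\widetilde f+\psi_\varepsilon^++\varepsilon$ there (e.g.\ where $\widetilde f$ is near its minimum), and ``at infinity by construction'' is circular, since the behavior of $u$ at infinity is exactly what is being proved; the maximum principle on the unbounded exterior domain therefore cannot be applied as claimed, and the conclusion $\limsup|u-\bar u-\widetilde f|\le C'\varepsilon$ does not follow. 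The paper resolves precisely this point by building \emph{global} barriers before taking any limit: $\underline z(x,\widetilde Cy)=f(\widetilde Cy)-p_1(\widetilde Cy)\cdot\widetilde Cy+\psi(x+p_1(\widetilde Cy))$ and its analogue $\overline z$, with $p_{1,2}=Df\pm 2M\widetilde Cy$ encoding the first-order Taylor control of $f$, and then $\underline u=\sup_y\underline z$, $\overline u=\inf_y\overline z$ (the Treibergs/Spruck--Xiao construction); these are sub/supersolutions with the exact asymptotics (\ref{asymptotic u f}), and every approximate solution is squeezed between them from the start. A repair in your spirit is possible (compare each $u_R$ on an annulus $B_R\setminus B_{R_0}$ with $\bar u+\widetilde f\pm K/r$, taking $K$ large enough both to absorb the $O(r^{-2})$ error through the $O(1)$ radial first-order coefficient coming from the $\nu$-term and to dominate $2\|f\|_\infty$ on $\partial B_{R_0}$), but that argument is not the one you wrote.

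Two further points are weaker than you assert. First, your exhaustion uses Dirichlet data $(\bar u+\widetilde f)|_{\partial B_R}$, which is not covered by Theorem \ref{thm dp}: the boundary gradient estimate there is proved with radial solitons vanishing on spheres and explicitly uses that $u$ is constant on $\partial\Omega$; the paper avoids non-constant data altogether by exhausting with the level sets $\Omega_m=\Phi^{-1}(-\infty,m)$ of a strictly convex $\Phi$ squeezed between the barriers and prescribing the constant $m$ on $\partial\Omega_m$. Second, the interior gradient and $C^2$ estimates are not ``standard'' consequences of the $C^0$ bound and convexity of $\bar u$: keeping $\nu$ strictly below $C$ on compact sets is a substantial argument in the paper (the test function $\varphi=\eta/(C-\nu)$ with $\eta=(u-\psi)^A$ and a carefully built spacelike convex $\psi$), and the Urbas $C^2$ estimate must be adapted to the gradient-dependent right-hand side (Lemma \ref{lem est W tilde}); these cannot simply be cited.
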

Similar results concerning the entire soliton equation for the mean curvature flow in Minkowski space were proved in \cite{Ji,JJLS,JJL}, \cite{Di} and \cite{SX}.
\begin{rem}
1. It may be proved that the second fundamental form of the graph of a radial solution in Theorem \ref{thm radial} decays as $1/r$ at infinity, and more specifically that
$$\mathcal{H}_1[u]\sim_{+\infty} \alpha_n\ \sqrt{C^2-1}\ \frac{1}{r}\hspace{.5cm}\mbox{and}\hspace{.5cm}\mathcal{H}_2[u]\sim_{+\infty}\beta_n\ (C^2-1)\ \frac{1}{r^2}$$
for some constants $\alpha_n,\beta_n>0$ depending only on the dimension $n$. In particular a radial soliton satisfies the pinching condition
\begin{equation}\label{rem pinching condition}
\alpha\ \mathcal{H}_1[u]\leq \mathcal{H}_2[u]^{\frac{1}{2}}\leq \beta\ \mathcal{H}_1[u]
\end{equation}
for some constants $\alpha,\beta>0.$ Such a pinching on the curvatures of the initial data was used in \cite{AS,Ch} for the study of the euclidean scalar curvature flow. We do not know if (\ref{rem pinching condition}) holds for the more general solitons obtained in Theorem \ref{main thm}.
\\2. In Theorem \ref{thm dp}, if $\Omega$ is a ball then the solution $u$ of the Dirichlet problem is the restriction of a radial soliton described in Theorem \ref{thm radial}.
\\3. By the standard elliptic regularity theory the solitons obtained in the three theorems are automatically smooth (since they are classical solutions of elliptic equations with smooth coefficients).
\\4. The uniqueness properties in the three theorems above are consequences of the standard maximum principle. For instance, in Theorem \ref{main thm}, if $u_1$ and $u_2$ are admissible solutions of (\ref{soliton equation C}) such that (\ref{asymptotic u f}) holds, and assuming by contradiction that $u_1(x_0)<u_2(x_0)$ at some point $x_0\in\R^n,$ we consider $u_1^\varepsilon:=u_1+\varepsilon$ with $\varepsilon>0$ such that 
$$u_1^\varepsilon(x_0)=u_1(x_0)+\varepsilon<u_2(x_0).$$
Since we have by (\ref{asymptotic u f})
$$(u_1^\varepsilon-u_2)(x)\rightarrow_{|x|\rightarrow +\infty}\varepsilon>0,$$
the set $\{u_1^\varepsilon\leq u_2\}$ is compact, and the maximum principle implies that $u_1^\varepsilon\equiv u_2$ on this set, a contradiction.
\\5. In Theorem \ref{main thm}, if the function $f$ is a constant then $u$ is a radial function: (\ref{soliton equation C})-(\ref{asymptotic u f}) are indeed invariant by rotations of center $0$ if $f$ is constant, and we have seen above that an admissible solution is unique (up to the addition of a constant). 
\\6. More is known on the structure of the entire solitons to the mean curvature flow in $\R^{n,1}:$ for example, they are convex and satisfy a splitting theorem, Theorem 1.2 in \cite{SX}. The question of whether these results extend to the scalar curvature flow is open.
\end{rem}
Let us quote some related papers. Entire mean curvature flow in Minkowski space was studied in \cite{Ec1,Ec2,Ec3,Aa}, and the corresponding soliton equation in \cite{Di,Ji,JJLS,JJL,SX}. The scalar curvature flow in Lorentzian geometry was first studied in \cite{Ge1,Ge2,En}. We studied in \cite{Bay4} the entire scalar curvature flow in Minkowski space, for an initial hypersurface at a bounded distance of a lightcone.

The paper is organized as follows.  In Section \ref{section radial soliton} we study the radial soliton equation and prove Theorem \ref{thm radial}, in Section \ref{section dirichlet problem} we solve the Dirichlet problem (Theorem \ref{thm dp}) and in Section \ref{section entire solutions} we study the entire solitons with prescribed values at infinity proving Theorem \ref{main thm}. A short appendix ends the paper on the asymptotic behavior of the solutions of some classical ordinary differential equations.
\section{Entire radial solitons}\label{section radial soliton}
We prove here Theorem \ref{thm radial} concerning radial solutions of the entire soliton equation: for radial solutions, the equation reduces to an ordinary differential equation. Let us note that a radial function $u:\R^n\rightarrow\R$ of class $C^2$ may be written in the form
\begin{equation}\label{u function y}
u(x)=u(0)+\int_0^{|x|}y(r)dr
\end{equation}
for some function $y:[0,+\infty)\rightarrow\R$ of class $C^1$ such that $y(0)=0.$ With this notation we have at $x\in\R^n\backslash\{0\}$
$$\partial_i u=y\ \frac{x_i}{|x|}\hspace{.5cm}\mbox{and}\hspace{.5cm}\partial^2_{ij}u=\frac{y}{|x|}\left(\delta_{ij}-\frac{x_ix_j}{|x|^2}\right)+y'\frac{x_ix_j}{|x|^2},$$
which yields, at $x=(r,0,\ldots,0),$ $r>0,$
$$Du=(y,0,\ldots,0),\hspace{1cm}D^2u=diag\left(y',\frac{y}{r},\ldots,\frac{y}{r}\right)$$
and by (\ref{expr shape op}) the following expression for the matrix of the shape operator of $M=graph(u)$
\begin{equation}\label{expr S y}
\mathcal{S}:=\frac{1}{\sqrt{1-y^2}}\ diag\left(\frac{y'}{1-y^2},\frac{y}{r},\ldots,\frac{y}{r}\right).
\end{equation}
Equation (\ref{back soliton equation}) is thus equivalent to the ordinary differential equation 
\begin{equation}\label{eqn y}
\frac{2}{n(1-y^2)}\ \frac{y}{r}\left\{\frac{y'}{1-y^2}+\frac{n-2}{2}\frac{y}{r}\right\}=\left(H(r)-\frac{1}{\sqrt{1-y^2}}\right)^2
\end{equation}
on $(0,+\infty)$ with
$$\frac{1}{\sqrt{1-y^2}}<H(r).$$
The following proposition together with (\ref{u function y}) will thus imply the first part of the theorem:
\begin{prop}\label{prop ineq y}
Under the hypotheses of Theorem \ref{thm radial}, there exists $y:[0,+\infty)\rightarrow\R$ of class $C^1$ solution of (\ref{eqn y}) on $(0,+\infty)$ and such that
\begin{equation}\label{ineq y}
y(0)=0\hspace{.5cm}\mbox{and}\hspace{.5cm}0\leq y<\sqrt{1-\frac{1}{H(r)^2}}. 
\end{equation}
It satisfies
\begin{equation}\label{lim y infty}
\lim_{r\rightarrow+\infty}y(r)=\sqrt{1-\frac{1}{C^2}}.
\end{equation}
Moreover $y'(0)=H(0)-1,$ $y'>0$ on $[0,+\infty)$  and the matrix $\mathcal{S}$ defined in (\ref{expr S y}) is positive and converges to
$$\mathcal{S}(0):=(H(0)-1)\ diag(1,1,\ldots,1)$$
as $r$ tends to $0.$ 
\end{prop}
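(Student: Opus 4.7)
My plan is to recast (\ref{eqn y}) as a first-order initial value problem, remove the singularity at $r=0$, establish global existence through a priori bounds, and then extract the monotonicity of $y$ and its limit at infinity from the algebraic structure of the equation.

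Solving (\ref{eqn y}) for $y'$ gives $y'=F(r,y)$ with $F$ smooth on the strip $\{(r,y):r>0,\ 0<y<\sqrt{1-1/H(r)^2}\}$ and singular at the origin. To handle $r=0$ I would substitute $y(r)=rz(r)$: plugging into (\ref{eqn y}) and taking the limit as $r\to 0$ forces $z(0)^2=(H(0)-1)^2$, so the positive root gives $z(0)=H(0)-1$, hence $y'(0)=H(0)-1$. The transformed equation is a regular first-order ODE for $z$ near $(0,H(0)-1)$, so standard ODE theory yields a unique local $C^1$ solution, which extends to a maximal interval $[0,R_{\max})$ as long as $y$ stays in the admissible strip.

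The key a priori bounds are $0<y(r)<\sqrt{1-1/H(r)^2}$ on $(0,R_{\max})$. The lower bound is immediate: at any interior zero $r_1$ of $y$, the left-hand side of (\ref{eqn y}) vanishes while the right-hand side equals $(H(r_1)-1)^2>0$. For the upper bound, at a hypothetical first contact point $r_0>0$ the right-hand side of (\ref{eqn y}) vanishes, which together with $y(r_0)>0$ forces
$$y'(r_0)=-\frac{(n-2)\bigl(1-y(r_0)^2\bigr)y(r_0)}{2r_0}<0;$$
but the monotonicity of $H$ makes $r\mapsto\sqrt{1-1/H(r)^2}$ non-decreasing, so the first contact condition requires $y'(r_0)\geq 0$, a contradiction. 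Hence $R_{\max}=+\infty$.

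For the strict monotonicity $y'>0$, a direct rearrangement of (\ref{eqn y}) shows that this inequality is equivalent, inside the admissible strip, to $\Delta(r):=rG(r)-\sqrt{(n-2)/n}\,y(r)/\sqrt{1-y(r)^2}>0$, where $G:=H-1/\sqrt{1-y^2}>0$. Near $r=0$, the asymptotics $y\sim(H(0)-1)r$ and $G\sim H(0)-1$ give $\Delta(r)\sim(H(0)-1)\bigl(1-\sqrt{(n-2)/n}\bigr)r>0$. If $r_0>0$ were a first zero of $\Delta$, then $y'(r_0)=0$, and differentiating $\Delta$ at $r_0$ (all $y'$-terms drop out) yields $\Delta'(r_0)=G(r_0)+r_0H'(r_0)>0$, contradicting that $\Delta$ reaches $0$ from above. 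Hence $y'>0$ on $(0,+\infty)$, and the shape operator formula (\ref{expr S y}) combined with $y'(0)=H(0)-1$ yields $\mathcal{S}(0)=(H(0)-1)I$. Finally, $y$ is non-decreasing and bounded above by $\sqrt{1-1/H^2}\to\sqrt{1-1/C^2}$, so it admits a limit $L\leq\sqrt{1-1/C^2}$; if $L<\sqrt{1-1/C^2}$, then $G$ stays bounded below by a positive constant at infinity and (\ref{eqn y}) forces $y'$ to grow linearly in $r$, contradicting boundedness. I expect the monotonicity argument to be the subtle part, since it uses the precise algebraic form of the equation and the hypothesis that $H$ is non-decreasing.
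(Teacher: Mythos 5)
Your global structure (barrier bounds keeping $y$ in the strip $0<y<\sqrt{1-1/H^2}$, the monotonicity via the quantity $\Delta=rG-\sqrt{(n-2)/n}\,y/\sqrt{1-y^2}$, and the limit at infinity by forcing $y'\to\infty$ otherwise) is sound and in fact gives a cleaner monotonicity argument than the paper's, which works at the level of a regularized equation. But there is a genuine gap at the crucial step: the treatment of the singularity at $r=0$. Writing $y=rz$ does \emph{not} produce a regular ODE near $(0,H(0)-1)$. Solving (\ref{eqn y}) for $y'$ and substituting $y=rz$ gives
$$z'=\frac{1}{r}\left[\frac{n(1-r^2z^2)^2}{2z}\left(H(r)-\frac{1}{\sqrt{1-r^2z^2}}\right)^2-\frac{n}{2}\,z+O(r^2)\right],$$
whose bracket at $r=0$ equals $\frac{n}{2z}\bigl[(H(0)-1)^2-z^2\bigr]$. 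This vanishes at $z_0=H(0)-1$, but its $z$-derivative there is $-n\neq 0$, so near $(0,z_0)$ the equation has the form $z'=\frac{-n(z-z_0)}{r}+\mbox{bounded}$: the right-hand side is unbounded in every neighborhood of $(0,z_0)$, the Cauchy--Lipschitz theorem does not apply, and your appeal to ``standard ODE theory'' for a unique local $C^1$ solution with $z(0)=z_0$ is unjustified as stated. Since your asymptotics $y\sim(H(0)-1)r$ feed into the positivity of $\Delta$ near $0$, the value $y'(0)$, the $C^1$ regularity at $0$, and the limit of $\mathcal{S}$, this is not a peripheral issue but the central analytic difficulty of the proposition.

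The gap is fixable, but it needs an actual argument for the singular initial value problem: for instance a fixed-point argument for the integral equation $z(r)=z_0+r^{-n}\int_0^r s^{n}\,h(s,z(s))\,ds$, which is exactly adapted to the coefficient $-n$ above. The paper circumvents the problem differently: it passes to $v=y^2$, regularizes the equation by replacing $r$ with $r+\varepsilon$ (so that Cauchy--Lipschitz applies with $v_\varepsilon(0)=0$), derives $\varepsilon$-independent bounds, extracts a limit solution on $(0,+\infty)$ by compactness, and only then recovers the behavior at $r=0$ through the linear equation $w'+nw/r=h(r)/r$ for $w=(y/r)^2$, whose unique bounded solution $w(r)=r^{-n}\int_0^r h(s)s^{n-1}ds$ yields $y/r\to H(0)-1$ and the $C^1$ regularity at the origin. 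Either route is acceptable, but some version of this singular analysis must be supplied; as written, your proof assumes away precisely the point it was required to establish.
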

Before the proof of the proposition, let us observe that, setting $v=y^2,$ (\ref{eqn y}) transforms to
\begin{equation}\label{eqn v}
v'=-(n-2)\frac{v(1-v)}{r}+n\ r(1-v)(H(r)\sqrt{1-v}-1)^2,
\end{equation}
and, following ideas in \cite{JJL}, we consider, for $\varepsilon>0,$ the auxiliary equation
\begin{equation}\label{eqn v epsilon}
v'=-(n-2)\frac{v(1-v)}{r+\varepsilon}+n(r+\varepsilon)(1-v)(H(r)\sqrt{1-v}-1)^2.
\end{equation}
We first solve (\ref{eqn v epsilon}) (Lemma \ref{lem v eps}), then (\ref{eqn v}) (Lemma \ref{lem v}), and afterwards prove Proposition \ref{prop ineq y}.
\begin{lem}\label{lem v eps}
(\ref{eqn v epsilon}) admits a smooth solution $v_{\varepsilon}:[0,+\infty)\rightarrow \R$ such that $v_{\varepsilon}(0)=0.$  It is non-decreasing on $[0,+\infty),$ such that 
\begin{equation}\label{ineq veps}
0<v_{\varepsilon}(r)<1-\frac{1}{H(r)^2}\hspace{.5cm}\mbox{on}\hspace{.5cm}(0,+\infty),
\end{equation}
and satisfies
$$\lim_{r\rightarrow +\infty}v_{\varepsilon}(r)=1-\frac{1}{C^2}.$$
\end{lem}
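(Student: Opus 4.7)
The plan is to treat (\ref{eqn v epsilon}) as a scalar ODE
$$v'=F(r,v),\qquad F(r,v):=-(n-2)\frac{v(1-v)}{r+\varepsilon}+n(r+\varepsilon)(1-v)\bigl(H(r)\sqrt{1-v}-1\bigr)^2,$$
and to control the solution by a nullcline/barrier argument. Because the regularization by $\varepsilon$ removes the singularity of $1/r$ at the origin, $F$ is smooth on $[0,+\infty)\times(-\infty,1)$ and locally Lipschitz in $v$, so the Cauchy problem with $v_\varepsilon(0)=0$ admits a unique local $C^\infty$ solution by Picard--Lindel\"of. I would then factor
$$F(r,v)=\frac{1-v}{r+\varepsilon}\,G(r,v),\qquad G(r,v):=n(r+\varepsilon)^2\bigl(H(r)\sqrt{1-v}-1\bigr)^2-(n-2)v,$$
so that the sign of $v_\varepsilon'$ is governed by that of $G$.

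The central step is the construction of the nullcline barrier. A direct computation shows that on $\{0\leq v<1-1/H(r)^2\}$ one has $\partial_v G<0$ (using $H\sqrt{1-v}>1$), together with $G(r,0)>0$ and $G(r,1-1/H(r)^2)<0$; hence there is a unique $v_\ast(r)\in(0,1-1/H(r)^2)$ with $G(r,v_\ast(r))=0$. The assumption that $H$ is non-decreasing gives $\partial_r G\geq 0$ on the same region, and the implicit function theorem then yields that $v_\ast$ is smooth and non-decreasing in $r$, with $v_\ast(r)\to 1-1/C^2$ as $r\to+\infty$ (since $H(r)\to C$).

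The heart of the argument is the barrier inequality $0\leq v_\varepsilon(r)\leq v_\ast(r)$ on the maximal interval of existence. The upper bound is proved by contradiction: if $r_1$ is the first point where $v_\varepsilon$ meets $v_\ast$, then $v_\varepsilon<v_\ast$ just before $r_1$ forces $G(r,v_\varepsilon(r))>0$ and therefore $v_\varepsilon'>0$ there, while $v_\varepsilon'(r_1)=F(r_1,v_\ast(r_1))=0$; but $(v_\varepsilon-v_\ast)(r_1)=0$ approached from below gives $(v_\varepsilon-v_\ast)'(r_1)\geq 0$, hence $v_\ast'(r_1)\leq 0$, contradicting $v_\ast'>0$. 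The lower bound $v_\varepsilon\geq 0$ follows from the same sign discussion since $F(r,0)=n(r+\varepsilon)(H(r)-1)^2>0$. With the barrier in hand, $F(r,v_\varepsilon(r))\geq 0$ shows $v_\varepsilon$ is non-decreasing, the bound $v_\varepsilon<v_\ast<1-1/C^2$ gives a uniform a priori estimate away from $v=1$, so the solution extends smoothly to all of $[0,+\infty)$, and $v_\varepsilon(0)=0$ together with $v_\varepsilon'(0)=n\varepsilon(H(0)-1)^2>0$ forces strict positivity on $(0,+\infty)$.

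Finally, the limit at infinity is obtained as follows. Being monotone and bounded, $v_\varepsilon$ has a limit $L\in[0,1-1/C^2]$; if $L<1-1/C^2$ then $C\sqrt{1-L}>1$, and since $H(r)\to C$ the quantity $(H(r)\sqrt{1-v_\varepsilon(r)}-1)^2$ is eventually bounded below by a positive constant, so the positive term in $F$ grows linearly in $r$ while the negative one decays like $1/(r+\varepsilon)$; integrating would force $v_\varepsilon\to+\infty$, a contradiction. Hence $L=1-1/C^2$. The main obstacle I anticipate is the nullcline comparison in the third paragraph, i.e., verifying the monotonicity of $v_\ast$ cleanly and ruling out a first crossing; once $v_\varepsilon\leq v_\ast$ is established, all the remaining conclusions follow routinely.
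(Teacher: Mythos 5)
Your proof is correct, but the middle of the argument takes a genuinely different route from the paper. The paper never introduces the nullcline: it compares $v_\varepsilon$ directly with the curve $r\mapsto 1-1/H(r)^2$ (at a first touching point the ODE forces $v_\varepsilon'<0$ while the geometry forces $v_\varepsilon'\geq 2H'/H^3\geq 0$), and then proves monotonicity by a separate two-step argument: first ruling out $v_\varepsilon'\leq 0$ on a half-line $[r_0,+\infty)$ (this would cap $\limsup H$ below $H(r_0)$, contradicting $H$ non-decreasing), and then ruling out a decreasing interval between two critical points $r_1<r_2$ by comparing the identities $(n-2)v_\varepsilon(r_i)=n(r_i+\varepsilon)^2(H(r_i)\sqrt{1-v_\varepsilon(r_i)}-1)^2$. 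Your nullcline barrier $v_\ast$, obtained from $G(r,v_\ast)=0$ via the implicit function theorem, packages both conclusions at once: $v_\varepsilon\leq v_\ast<1-1/H^2$ gives the upper bound, and $G\geq 0$ below the nullcline gives $v_\varepsilon'\geq 0$ for free, which is arguably cleaner than the paper's case analysis; both approaches use the monotonicity of $H$ at exactly one place (you to get $\partial_r G\geq 0$, the paper in its two monotonicity steps). One point you should make explicit: your first-crossing argument needs $v_\ast'(r_1)>0$ strictly, not just $v_\ast'\geq 0$ (with only $v_\ast'(r_1)=0$ there is no contradiction with $(v_\varepsilon-v_\ast)'(r_1)\geq 0$ and $v_\varepsilon'(r_1)=0$). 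This is true and cheap in your setup: since $G(r,0)>0$ the root satisfies $v_\ast(r)>0$, hence $n(r+\varepsilon)^2(H\sqrt{1-v_\ast}-1)^2=(n-2)v_\ast>0$ forces $H\sqrt{1-v_\ast}-1>0$, so $\partial_r G(r,v_\ast(r))\geq 2n(r+\varepsilon)(H\sqrt{1-v_\ast}-1)^2>0$ and the implicit function theorem gives strict increase; but as written you only claim "non-decreasing" from $\partial_r G\geq 0$ and then invoke $v_\ast'>0$, so close that small loop. The existence, positivity, global extension, and limit-at-infinity parts of your argument coincide with the paper's.
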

\begin{proof}
Equation (\ref{eqn v epsilon}) is of the form $v'=f_{\varepsilon}(r,v)$ with the smooth function $f_{\varepsilon}:\ (-\varepsilon,+\infty)\times (-\infty,1)\rightarrow \R$ defined by
\begin{equation}\label{expr feps}
f_{\varepsilon}(r,v)=-(n-2)\frac{v(1-v)}{r+\varepsilon}+n(r+\varepsilon)(1-v)\left(H(r)\sqrt{1-v}-1\right)^2
\end{equation}
where the function $H$ is extended smoothly to $(-\varepsilon,+\infty).$ The Cauchy-Lipschitz theorem yields a smooth solution $v_{\varepsilon}:[0,T^*)\rightarrow (-\infty,1)$ such that $v_{\varepsilon}(0)=0.$ We moreover choose $T^*$ such that $[0,T^*)$ is maximal. Let us note that the differential equation implies that
$$v_{\varepsilon}'(0)=n\varepsilon(H(0)-1)^2>0,$$
so that $v_{\varepsilon}>0$ near $r=0.$ Let us prove by contradiction that $v_{\varepsilon}>0$ on $(0,T^*):$ if $r_0>0$ is the first value such that $v_{\varepsilon}(r_0)=0$ we have $v'_{\varepsilon}(r_0)\leq 0$ and by the differential equation again
$$v'_{\varepsilon}(r_0)=n(r_0+\varepsilon)(H(r_0)-1)^2>0,$$
a contradiction. We show similarly that the second inequality in (\ref{ineq veps}) holds on $[0,T^*):$ by contradiction, if $r_0>0$ is the first point such that 
$$v_{\varepsilon}(r_0)=1-\frac{1}{H(r_0)^2}$$
the function
$$r\mapsto v_{\varepsilon}(r)-\left(1-\frac{1}{H(r)^2}\right)$$
is negative on $[0,r_0)$ and vanishes at $r_0;$ so its derivative is non-negative at $r_0,$ which implies
$$v_{\varepsilon}'(r_0)\geq 2\frac{H'(r_0)}{H(r_0)^3}\geq 0.$$
But the differential equation implies
$$v_{\varepsilon}'(r_0)=-\frac{n-2}{r_0+\varepsilon}\left(1-\frac{1}{H(r_0)^2}\right)\frac{1}{H(r_0)^2}<0,$$
again a contradiction. So (\ref{ineq veps}) holds on $(0,T^*),$ and since $H\leq C$ we deduce that $v_{\varepsilon}\in (0,1-\frac{1}{C^2})$ on $(0,T^*),$ and thus $T^*=+\infty$ (since a maximal solution leaves any compact subset of $(-\varepsilon,+\infty)\times (-\infty,1)$). We now prove that $v_{\varepsilon}'>0$ on $[0,+\infty):$ by contradiction, since $v_{\varepsilon}'(0)>0$ there would be a first point $r_1>0$ such that $v_{\varepsilon}'(r_1)=0,$ and differentiating (\ref{eqn v epsilon}) we would obtain at $r_1$
\begin{eqnarray*}
v_{\varepsilon}''(r_1)&=&(n-2)\frac{v_{\varepsilon}(1-v_{\varepsilon})}{(r_1+\varepsilon)^2}+n(1-v_{\varepsilon})\left(H(r_1)\sqrt{1-v_{\varepsilon}}-1\right)^2\\
&&+2n(r_1+\varepsilon)(1-v_{\varepsilon})\left(H(r_1)\sqrt{1-v_{\varepsilon}}-1\right)H'(r_1)\sqrt{1-v_{\varepsilon}}>0,
\end{eqnarray*}
a contradiction since ${v_{\varepsilon}'}_{|[0,r_1]}$ would also reach its minimum at $r_1$. Finally, if
$$l:=\lim_{r\rightarrow+\infty} v_{\varepsilon}(r)<1-\frac{1}{C^2},$$ 
it is easy to get from (\ref{eqn v epsilon}) that $v_{\varepsilon}'(r)\rightarrow +\infty$ as $r\rightarrow+\infty.$ This is impossible since $v_\varepsilon$ is bounded.
\end{proof}
We now pass to the limit $\varepsilon\rightarrow 0$ in (\ref{eqn v epsilon}) and obtain a solution of (\ref{eqn v}):
\begin{lem}\label{lem v}
There exists a sequence $\varepsilon_i\rightarrow_{i\rightarrow +\infty}0$ such that $(v_{\varepsilon_i})_{i\in\N}$ converges smoothly on compact subsets of $(0,+\infty).$
The limit $v=\lim_{i\rightarrow+\infty}v_{\varepsilon_i}$ is a solution of (\ref{eqn v}) on $(0,+\infty);$ $v'>0$ on $(0,+\infty),$ and $v$ is such that
\begin{equation}\label{ineg v lem}
0<v(r)<1-\frac{1}{H(r)^2}\hspace{.5cm}\mbox{on}\hspace{.5cm}(0,+\infty)
\end{equation}
and satisfies
\begin{equation}\label{lim v infty}
\lim_{r\rightarrow 0}v(r)=0\hspace{.5cm}\mbox{and}\hspace{.5cm}\lim_{r\rightarrow +\infty}v(r)=1-\frac{1}{C^2}.
\end{equation}
\end{lem}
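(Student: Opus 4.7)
The plan is a standard compactness argument plus some careful ODE analysis to transfer the qualitative properties of $v_{\varepsilon}$ to the limit.

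First I would extract the limit. Fix any compact interval $[r_0,R] \subset (0,+\infty)$ and assume $\varepsilon < r_0/2$. Since $0 \le v_{\varepsilon} \le 1-1/C^2$ and the right-hand side $f_{\varepsilon}(r,v)$ in (\ref{expr feps}) is smooth and uniformly bounded on $[r_0,R]\times[0,1-1/C^2]$ as $\varepsilon\to 0$, equation (\ref{eqn v epsilon}) gives a uniform bound on $v_{\varepsilon}'$ on $[r_0,R]$; differentiating the ODE yields uniform bounds on derivatives of any order. A diagonal Arzelà–Ascoli argument across an exhaustion of $(0,+\infty)$ by such intervals then produces a subsequence $v_{\varepsilon_i}$ converging smoothly on compact subsets to a smooth limit $v$. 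Passing to the limit in (\ref{eqn v epsilon}) (noting that the singular factor $1/(r+\varepsilon)$ becomes the regular factor $1/r$ on such compacts) shows that $v$ solves (\ref{eqn v}) on $(0,+\infty)$.

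Next I would transfer the qualitative features. Monotonicity $v' \ge 0$ and the non-strict bound $0 \le v \le 1-1/H^2$ on $(0,+\infty)$ are immediate from the corresponding properties of $v_{\varepsilon_i}$. Strictness is obtained exactly by the maximum-principle-style arguments already used in Lemma \ref{lem v eps}: if $v(r_0) = 1-1/H(r_0)^2$ at some $r_0>0$, evaluating (\ref{eqn v}) at $r_0$ gives $v'(r_0) < 0$, contradicting $v'(r_0) \ge 2H'(r_0)/H(r_0)^3 \ge 0$; if $v(r_0)=0$ for some $r_0>0$, then by monotonicity $v \equiv 0$ on $[0,r_0]$, but (\ref{eqn v}) on $(0,r_0)$ reads $0 = nr(H(r)-1)^2 > 0$, again a contradiction. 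For the strict inequality $v'>0$ I would argue: if $v'(r_0)=0$ at some $r_0>0$, then since $v' \ge 0$ the point $r_0$ is a local minimum of $v'$, so $v''(r_0)=0$; however differentiating (\ref{eqn v}) and using $v'(r_0)=0$ together with $H'\ge 0$, $v(r_0)\in(0,1-1/H(r_0)^2)$ gives a strictly positive expression for $v''(r_0)$, a contradiction.

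The step I expect to be the main obstacle is the boundary behavior $v(r)\to 0$ as $r\to 0$, because the convergence is only on compact subsets of $(0,+\infty)$. The fix is a uniform upper bound near the origin. Since $v_{\varepsilon} \ge 0$ and $H \le C$, the first (negative) term in (\ref{eqn v epsilon}) drops out and one obtains
\begin{equation*}
v_{\varepsilon}'(r) \le n(r+\varepsilon)(C-1)^2,
\end{equation*}
which integrated from $0$ with $v_{\varepsilon}(0)=0$ yields
\begin{equation*}
v_{\varepsilon}(r) \le n(C-1)^2\left(\tfrac{r^2}{2}+\varepsilon r\right).
\end{equation*}
Taking $\varepsilon_i \to 0$ gives $v(r) \le \tfrac{n}{2}(C-1)^2 r^2$, forcing $v(r)\to 0$ as $r\to 0$.

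Finally, the limit at infinity: since $v$ is non-decreasing and bounded above by $1-1/C^2$, set $\ell := \lim_{r\to+\infty} v(r) \le 1-1/C^2$. If $\ell < 1-1/C^2$, then $H(r)\sqrt{1-\ell} - 1 \to C\sqrt{1-\ell}-1 > 0$, so the second term on the right-hand side of (\ref{eqn v}) grows linearly in $r$ while the first is bounded, forcing $v'(r)\to+\infty$ and contradicting the boundedness of $v$. Hence $\ell = 1-1/C^2$, completing the proof.
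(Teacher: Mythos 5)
Your proof is correct and follows essentially the same route as the paper: uniform bounds on $v_{\varepsilon}$ and all of its derivatives on compact subsets of $(0,+\infty)$, Arzel\`a--Ascoli with a diagonal extraction, passage to the limit in (\ref{eqn v epsilon}), transfer of monotonicity and of the non-strict inequalities, strict monotonicity via the second-derivative contradiction at an interior zero of $v'$, and the same blow-up argument for the limit at infinity. The only genuine variation is the proof that $v(r)\rightarrow 0$ as $r\rightarrow 0$: you integrate the $\varepsilon$-equation from $0$ to obtain the uniform bound $v_{\varepsilon}(r)\leq c\left(r^{2}+\varepsilon r\right)$ and then let $\varepsilon\rightarrow 0$, whereas the paper reads the decay off the limiting equation itself, combining (\ref{eqn v}) with $v'\geq 0$ to get $(n-2)v\leq nr^{2}\left(H(r)\sqrt{1-v}-1\right)^{2}\rightarrow 0$; both work, yours implicitly using $\left(H\sqrt{1-v_{\varepsilon}}-1\right)^{2}\leq (C-1)^{2}$, which is legitimate by (\ref{ineq veps}) (or one may simply use the cruder bound $\max\left(1,(C-1)^{2}\right)$). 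Two harmless inaccuracies: the inequality $v'(r_0)\geq 2H'(r_0)/H(r_0)^{3}$ you invoke when excluding $v(r_0)=1-1/H(r_0)^{2}$ is only justified if $r_0$ is a first touching point, but the weaker fact $v'(r_0)\geq 0$ already contradicts $v'(r_0)=-(n-2)v(1-v)/r_0<0$; and since $v$ is only defined on $(0,+\infty)$, ``$v\equiv 0$ on $[0,r_0]$'' should read $(0,r_0]$.
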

\begin{proof}
Let us prove that for all $[a,b]\subset (0,+\infty)$ and $k\in\N$ there exists $M_k\in [0,+\infty)$ such that
\begin{equation}\label{estim vk}
\sup_{r\in[a,b]}|v_{\varepsilon}^{(k)}(r)|\leq M_k
\end{equation}
for all $\varepsilon\in (0,1).$ Recalling the expression (\ref{expr feps}) of $f_{\varepsilon}:(-\varepsilon,+\infty)\times (-\infty,1) \rightarrow\R,$ we see that $f_{\varepsilon}$ and all its partial derivatives are bounded on $[a,b]\times[0,1-1/C^2],$ independently of $\varepsilon\in (0,1).$ This gives (\ref{estim vk}) by induction: for $k=0,$ $v_{\varepsilon}$ takes values in $[0,1-1/C^2]$ and is bounded; assuming that $v_{\varepsilon},\ldots,v_{\varepsilon}^{(k)}$ are bounded on $[a,b],$ independently of $\varepsilon\in (0,1),$ so is $v_{\varepsilon}^{(k+1)}$: it is indeed obtained after $k$ derivations of $v_{\varepsilon}'=f_{\varepsilon}(r,v_{\varepsilon}),$ all of whose terms are bounded. Thus the estimates (\ref{estim vk}) hold, and extracting subsequences and using a diagonal process, we may construct  a sequence $\varepsilon_i\rightarrow_{i\rightarrow +\infty}0$ such that $(v_{\varepsilon_i})_{i\in\N}$ converges smoothly on compact subsets of $(0,+\infty).$ The smooth function $v=\lim_{i\rightarrow +\infty}v_{\varepsilon_i}$ is obviously solution of (\ref{eqn v}) on $(0,+\infty)$, is non-decreasing and satisfies
\begin{equation}\label{ineg v lem faible}
0\leq v(r)\leq1-\frac{1}{H(r)^2}\hspace{.5cm}\mbox{on}\hspace{.5cm}(0,+\infty)
\end{equation}
since so do the functions $v_{\varepsilon_i}.$ Thus $v'\geq 0$ on $(0,+\infty),$ and assuming that $v'=0$ at some point $r_0>0$, differentiating (\ref{eqn v}) we would have at $r_0$
\begin{eqnarray*}
v''(r_0)&=&(n-2)\frac{v(1-v)}{r_0^2}+n(1-v)\left(H(r_0)\sqrt{1-v}-1\right)^2\\
&&+2nr_0(1-v)\left(H(r_0)\sqrt{1-v}-1\right)H'(r_0)\sqrt{1-v}>0,
\end{eqnarray*}
a contradiction since $v'$ would reach a minimum at $r_0$ which implies $v''(r_0)=0$. Thus $v'>0$ on  $(0,+\infty)$ and (\ref{ineg v lem faible}) implies (\ref{ineg v lem}) (using also (\ref{eqn v}) to show that the last inequality in (\ref{ineg v lem faible}) is in fact a strict inequality). Moreover the limit at infinity in (\ref{lim v infty}) holds since, on the contrary, the differential equation would imply that $v'\rightarrow+\infty,$ a contradiction with $v$ bounded. For the limit as $r\rightarrow 0,$ let us note that (\ref{eqn v}) with the condition $v'\geq 0$ yields 
$$nr^2\left(H(r)\sqrt{1-v}-1\right)^2\geq (n-2)v\geq 0.$$
Since the left-hand side tends to $0$ as $r$ tends to $0,$ so does the function $v.$
\end{proof}

\noindent\emph{Proof of Proposition \ref{prop ineq y}.} Let $v:(0,+\infty)\rightarrow\R$ be the solution of (\ref{eqn v}) obtained in the previous lemma. The function $y:=\sqrt{v}:\ (0,+\infty)\rightarrow\R$ is $C^1,$ solves (\ref{eqn y}) on $(0,+\infty)$ and obviously satisfies the inequalities in (\ref{ineq y}) and the limit (\ref{lim y infty}). Setting $y(0)=0$ it is moreover continuous at 0. Let us prove that $y$ is also $C^1$ at $0:$ since the first term in the left-hand side of (\ref{eqn y}) is positive we get
$$0\leq\left(\frac{y}{r}\right)^2\leq \frac{n}{n-2}\left(H(r)\sqrt{1-y^2}-1\right)^2$$
and thus
$$0\leq\frac{y}{r}\leq \sqrt{\frac{n}{n-2}}\left(H(r)-1\right).$$
The function $\displaystyle{w=\left(\frac{y}{r}\right)^2}$ is bounded and by (\ref{eqn y}) satisfies
\begin{equation}\label{eqn w}
w'+n\frac{w}{r}=\frac{h(r)}{r}
\end{equation}
with
\begin{eqnarray*}
h(r)&=&(n-2)r^2w^2+n(1-r^2w)\left(H(r)\sqrt{1-r^2w}-1\right)^2\\
&&\rightarrow_{r\rightarrow 0}\ n\left(H(0)-1\right)^2.
\end{eqnarray*}
The unique bounded solution of (\ref{eqn w}) is
$$w(r)=\left(\int_0^rh(s)s^{n-1}ds\right)\ r^{-n}.$$
This implies that
$$w\sim_0\left(\int_0^rn(H(0)-1)^2s^{n-1}ds\right)\ r^{-n}=(H(0)-1)^2.$$
We thus have $y/r\rightarrow H(0)-1$ as $r$ tends to 0, which shows that $y$ is derivable at $0$ with $y'(0)=H(0)-1.$ Moreover, (\ref{eqn y}) again shows that
$$\frac{y}{r}\ y'\frac{1}{1-y^2}+\frac{n-2}{2}\left(\frac{y}{r}\right)^2\rightarrow_{r\rightarrow 0}\frac{n}{2}(H(0)-1)^2,$$
and, since $y/r\rightarrow H(0)-1$ and $y\rightarrow 0,$ yields $y'\rightarrow H(0)-1.$ Thus $y$ is of class $C^1$ on $[0,+\infty).$ We now prove that $y'>0$ on $[0,+\infty):$ $y'(0)=H(0)-1>0$ and we saw in Lemma \ref{lem v} that $v'=2yy'>0$ on $(0,+\infty).$ Finally, the last claim is a direct consequence of what we proved before. $\hfill\square$
\\

We now study the asymptotic behavior of the solutions of (\ref{eqn y}).
\begin{prop}
Let us assume that
$$H(r)=_{r\rightarrow+\infty}C\left(1-O\left(\frac{1}{r^2}\right)\right).$$
The solution $y$ of (\ref{eqn y}) has the following asymptotic expansion at infinity
\begin{equation}\label{form asympt 2}
y(r)=\sqrt{1-\frac{1}{C^2}}-\frac{1}{C^2}\sqrt{\frac{n-2}{n}}\ \frac{1}{r}+O\left(\frac{1}{r^2}\right).
\end{equation}
The relation (\ref{u  function y}) then gives the asymptotic behavior (\ref{asymptot u}) of the solution in Theorem \ref{thm radial}, which achieves the proof of that theorem.
\end{prop}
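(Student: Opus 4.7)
The plan is to substitute $z := y - \widetilde{C}$ into the ODE (\ref{eqn y}), identify the correct leading-order balance, and apply a classical ODE asymptotic lemma (presumably the content of the appendix) to pin down the rate of convergence; integration against (\ref{u function y}) then gives (\ref{asymptot u}). Expanding with $\widetilde{C}^2 = 1-1/C^2$ one gets $1-y^2 = 1/C^2 - 2\widetilde{C}z + O(z^2)$, hence $1/\sqrt{1-y^2} = C + C^3\widetilde{C}z + O(z^2)$, and the hypothesis $H(r) - C = O(1/r^2)$ yields
\[
\left(H-\frac{1}{\sqrt{1-y^2}}\right)^{\!2} = C^6\widetilde{C}^{\,2}\,z^2 + O(z/r^2) + O(z^3) + O(1/r^4).
\]
Solving (\ref{eqn y}) for $y' = z'$ and inserting the expansions of $1-y^2$ in the coefficients produces
\[
z'(r) \;=\; \frac{nC^2\widetilde{C}}{2}\,r\,z^2 \;-\; \frac{(n-2)\widetilde{C}}{2C^2}\,\frac{1}{r} \;+\; O\!\left(\frac{1}{r^2}\right).
\]

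Both main terms on the right are of size $1/r$ as soon as $z \sim 1/r$, while $z'$ is then $O(1/r^2)$, so the natural rescaling is $\alpha(r) := r\,z(r)$. A direct computation gives
\[
\alpha'(r) \;=\; F(\alpha) + O(1/r),\qquad F(\alpha) := \frac{nC^2\widetilde{C}}{2}\,\alpha^2 - \frac{(n-2)\widetilde{C}}{2C^2},
\]
whose two zeros are $\alpha_{\pm} = \pm\frac{1}{C^2}\sqrt{(n-2)/n}$. Since Proposition \ref{prop ineq y} gives that $y$ strictly increases towards $\widetilde{C}$, we have $z<0$, so $\alpha<0$, which forces the limit (if it exists) to be $\alpha_-$; moreover $F'(\alpha_-) = -n\widetilde{C}\sqrt{(n-2)/n}<0$, so $\alpha_-$ is the attracting equilibrium of $F$.

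The hard part is to rigorously establish $\alpha(r)\to\alpha_-$ with rate $O(1/r)$: this is where the appendix should do the work. Convergence can be obtained either by a barrier argument (sub- and super-solutions of the form $\alpha_-\pm\varepsilon$ for large $r$, exploiting the fact that on the negative axis $F(\alpha)>0$ above $\alpha_-$ and $F(\alpha)<0$ below it) or by quoting directly an asymptotic lemma for ODEs of the form $\alpha' = F(\alpha) + g(r)$ with hyperbolic stable equilibrium and $g(r)\to 0$. Once $\alpha\to\alpha_-$, linearization gives $(\alpha-\alpha_-)' = F'(\alpha_-)(\alpha-\alpha_-) + O(1/r)$, and variation of constants with $F'(\alpha_-)<0$ yields $\alpha - \alpha_- = O(1/r)$, that is $z(r) = \alpha_-/r + O(1/r^2)$, which is (\ref{form asympt 2}). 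Finally, integrating $y = \widetilde{C}+\alpha_-/r+O(1/r^2)$ via (\ref{u function y}) produces $u(x) = u(0)+\widetilde{C}|x|+\alpha_-\log|x|+c_1+o(1)$, because $\int^{\infty}(y-\widetilde{C}-\alpha_-/r)\,dr$ converges absolutely; substituting $\alpha_- = -\frac{1}{C^2}\sqrt{(n-2)/n}$ yields exactly (\ref{asymptot u}), with $c_0 := u(0)+c_1$.
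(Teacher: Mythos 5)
Your proposal is essentially the paper's own argument in slightly different clothing: your rescaled variable $\alpha=r\,(y-\widetilde{C})$ is (up to sign) the paper's $z$ defined by $y=\widetilde{C}-z/r$, whose equation is written there with \emph{variable converging coefficients} $z'=A(r)z^2+B(r)$, $A\to A_0<0$, $B\to B_0>0$, so that Proposition \ref{prop 2 app 1} gives $z\to\sqrt{-B_0/A_0}=\tfrac{1}{C^2}\sqrt{(n-2)/n}$, and your linearization-plus-variation-of-constants step is exactly the paper's second substitution $y=\widetilde{C}-\tfrac{1}{C^2}\sqrt{(n-2)/n}\,r^{-1}-\alpha r^{-2}$ combined with Proposition \ref{prop 3 app 1}. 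Two small corrections to your write-up: the parenthetical signs are reversed ($F<0$ just above $\alpha_-$ on the negative axis and $F>0$ just below it, which is what makes $\alpha_-$ attracting, consistently with your $F'(\alpha_-)<0$), and the remainders $O(1/r^2)$ in the $z$-equation and $O(1/r)$ in the $\alpha$-equation are only legitimate \emph{after} $\alpha$ is known to be bounded—beforehand they contain terms of size $o(1)\,\alpha^2+o(1)$, so for the convergence step one should keep the quadratic coefficient variable (i.e. use the form with $A(r),B(r)$ as in the paper, using only $y\to\widetilde{C}$ from Proposition \ref{prop ineq y}), reserving the $O(1/r)$ forcing for the final rate estimate.
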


\begin{proof}
Let us first prove that
\begin{equation}\label{form asympt 1}
y(r)=\sqrt{1-\frac{1}{C^2}}-\frac{1}{C^2}\sqrt{\frac{n-2}{n}}\ \frac{1}{r}+o\left(\frac{1}{r}\right).
\end{equation}
Let us consider the function $z$ such that
\begin{equation}\label{z function y}
y=\sqrt{1-\frac{1}{C^2}}- \frac{1}{r}z.
\end{equation}
We observe that it is a solution of an equation of the form
\begin{equation}\label{eqn z}
z'=Az^2+B
\end{equation}
with $A$ and $B$ such that
\begin{equation}\label{lim A}
\lim_{r\rightarrow \infty}A(r)=A_0=-\frac{nC^2}{2}\sqrt{1-\frac{1}{C^2}}
\end{equation}
and
\begin{equation}\label{lim B}
\lim_{r\rightarrow \infty}B(r)=B_0=\frac{n-2}{2C^2}\sqrt{1-\frac{1}{C^2}}.
\end{equation}
To see this, let us write (\ref{eqn y}) in the form
\begin{equation}\label{eqn y bis}
2\frac{ryy'}{1-y^2}=nr^2\left(H(r)\sqrt{1-y^2}-1\right)^2-(n-2)y^2.
\end{equation}

By a direct computation using (\ref{z function y}) and since $z/r\rightarrow 0$ (by (\ref{lim y infty})), the left-hand side of (\ref{eqn y bis}) may be written in the form
$$2\frac{ryy'}{1-y^2}=\alpha_1z'+\beta_1$$
with $\alpha_1\rightarrow-2C^2\sqrt{1-\frac{1}{C^2}}$ and $\beta_1\rightarrow 0.$ Similarly, since
\begin{equation}\label{dem exp formule auxiliaire}
H(r)\sqrt{1-y^2}-1=\left(C^2\sqrt{1-\frac{1}{C^2}}+o(1)\right)\frac{z}{r}+O\left(\frac{1}{r^2}\right),
\end{equation} 
the right-hand side of (\ref{eqn y bis}) is of the form
$$nr^2\left(H(r)\sqrt{1-y^2}-1\right)^2-(n-2)y^2=\alpha_2z^2+\beta_2$$
with $\alpha_2\rightarrow nC^4(1-\frac{1}{C^2})$ and $\beta_2\rightarrow -(n-2)(1-\frac{1}{C^2}).$ Equations (\ref{eqn z}), (\ref{lim A}) and (\ref{lim B}) follow. Proposition \ref{prop 2 app 1} in the appendix then implies that
$$\lim_{r\rightarrow+\infty}z(r)=\sqrt{-\frac{B_0}{A_0}}=\frac{1}{C^2}\sqrt{\frac{n-2}{n}},$$
which gives (\ref{form asympt 1}). We now prove (\ref{form asympt 2}). Let us consider the function $\alpha$ such that
$$y=\sqrt{1-\frac{1}{C^2}}-\frac{1}{C^2}\sqrt{\frac{n-2}{n}}\ \frac{1}{r}-\frac{\alpha}{r^2}.$$ 
By (\ref{form asympt 1}), $\alpha/r\rightarrow 0$ as $r$ tends to infinity.
Let us see that $\alpha$ is solution of an equation of the form
\begin{equation}\label{pf asympt y eqn alpha}
\alpha'+a\alpha=b
\end{equation}
with $a\rightarrow a_0>0$ and $b$ bounded; it will imply that $\alpha$ is bounded by Proposition \ref{prop 3 app 1} in the appendix, which will finish the proof of (\ref{form asympt 2}). To see that (\ref{pf asympt y eqn alpha}) holds, we write 
(\ref{eqn y}) in the form
\begin{equation}\label{eqn y ter}
2\frac{r^2yy'}{1-y^2}=r\left(nr^2\left(H(r)\sqrt{1-y^2}-1\right)^2-(n-2)y^2\right).
\end{equation}
The left-hand side of (\ref{eqn y ter}) may be written
$$2\frac{r^2yy'}{1-y^2}=-\frac{2y}{1-y^2}\alpha'+\frac{2y}{1-y^2}\left(r^2y'+\alpha'\right);$$
since
$$\frac{2y}{1-y^2}\rightarrow 2C^2\sqrt{1-\frac{1}{C^2}}\hspace{.5cm}\mbox{and}\hspace{.5cm}r^2y'+\alpha'=\frac{1}{C^2}\sqrt{\frac{n-2}{n}}+2\frac{\alpha}{r}\rightarrow\frac{1}{C^2}\sqrt{\frac{n-2}{n}},$$
it is of the form 
\begin{equation}\label{pf asympt y 1}
2\frac{r^2yy'}{1-y^2}=c_1\alpha'+c_2
\end{equation}
with $c_1\rightarrow -2C^2\sqrt{1-\frac{1}{C^2}}$ and $c_2$ bounded. For the right-hand side of (\ref{eqn y ter}), using the following improvement of the expansion (\ref{dem exp formule auxiliaire}) 
$$H(r)\sqrt{1-y^2}-1=\left(C^2\sqrt{1-\frac{1}{C^2}}+O\left(\frac{1}{r}\right)\right)\frac{z}{r}+O\left(\frac{1}{r^2}\right)$$
with
$$z=\frac{1}{C^2}\sqrt{\frac{n-2}{n}}+\frac{\alpha}{r}$$
bounded, it is easy to show that  
$$nr^3\left(H(r)\sqrt{1-y^2}-1\right)^2=(n-2)\left(1-\frac{1}{C^2}\right)r+c_3\alpha+O(1)$$
where $c_3\rightarrow 2\sqrt{n(n-2)}(C^2-1),$ and
$$r(n-2)y^2=(n-2)\left(1-\frac{1}{C^2}\right)r+O(1),$$
so that
\begin{equation}\label{pf asympt y 2}
r\left(nr^2\left(H(r)\sqrt{1-y^2}-1\right)^2-(n-2)y^2\right)=c_3\alpha+c_4
\end{equation}
with $c_3\rightarrow 2\sqrt{n(n-2)}(C^2-1)$ and $c_4$ bounded. Equation (\ref{eqn y ter}) with (\ref{pf asympt y 1}) and (\ref{pf asympt y 2}) implies (\ref{pf asympt y eqn alpha}), and the result.
\end{proof}

\section{The Dirichlet problem}\label{section dirichlet problem}
We consider here the Dirichlet problem with constant curvature and boundary values (\ref{dp 0b}), and prove Theorem \ref{thm dp}. We may suppose that the boundary condition is $u_0=0.$
\subsection{Method of resolution}
We use the method of continuity: we consider, for $\sigma \in[0,1],$ the family of Dirichlet problems
\begin{equation}\label{dp sigma}
\left\{\begin{array}{rcl}
{\mathcal{H}_2[u]}^{\frac{1}{2}}&=&\displaystyle{C-\frac{\sigma}{\sqrt{1-|Du|^2}}}\hspace{.3cm}\mbox{  in  }\Omega\\
\\u&=&0\hspace{.3cm}\mbox{  on  }\partial\Omega,
\end{array}\right.
\end{equation}
and set
$$S:=\{\sigma\in[0,1]:\ \exists u\in C^{\infty}(\overline{\Omega}) \mbox{ admissible solution of }(\ref{dp sigma}) \}.$$
1. $S\neq\emptyset$: by \cite{Bay1,Bay2,U} the Dirichlet problem (\ref{dp sigma}) with $\sigma=0$ admits a smooth admissible solution.
\\2. $S$ is open in $[0,1]:$ this is a consequence of the ellipticity of $\mathcal{H}_2$ at an admissible function and the linear theory.
\\3. $S$ is closed in $[0,1]$ if the following a priori estimates hold: there exist $\alpha\in (0,1)$ and controlled constants $M\in\R^+,$ $\theta\in (0,1)$ and $\delta>0$ such that
\begin{equation}\label{a priori estimates 1}
 \|u\|_{2,\alpha}\leq M,\hspace{1cm}\sup_{\overline{\Omega}}|Du|\leq 1-\theta
 \end{equation}
and
\begin{equation}\label{a priori estimates 2}
\mathcal{H}_2[u]\geq\delta
\end{equation}
for all the admissible solutions of (\ref{dp sigma}). The constants $M,$ $\theta$ and $\delta$ are assumed to be locally independent of $\sigma:$ if $\sigma_0\in(0,1]$ is given, $M,$ $\theta,$ and $\delta$ are independent of $\sigma$ in some neighborhood of $\sigma_0,$ and only depend on $\Omega,$ $C,$ $n$ and $\sigma_0.$ These estimates indeed imply that $S$ is closed in $[0,1]$: if $(\sigma_n)_{n\in\N}\in S^{\N}$ converges to $\sigma_0\in(0,1],$ and $u_n$ is an admissible solution of (\ref{dp sigma}) with $\sigma=\sigma_n,$ the estimates (\ref{a priori estimates 1}) allow to suppose that $(u_n)_{n\in\N}$ converges to a spacelike function $u$ in $C^2(\overline{\Omega}),$ which is a solution of (\ref{dp sigma}) with $\sigma=\sigma_0;$ moreover, the a priori estimate (\ref{a priori estimates 2}) implies that $u$ is admissible, since, passing to the limit in the Mac-Laurin inequality (\ref{mac laurin}) for $u_n$, we obtain
$$\mathcal{H}_1[u]\geq\mathcal{H}_2[u]^{\frac{1}{2}}\geq \delta^{\frac{1}{2}}>0.$$ 
Finally $u$ is smooth, by the standard elliptic regularity theory. This implies that $\sigma_0$ belongs to $S.$

So $S=[0,1]$ and the Dirichlet problem (\ref{dp 0b}) ((\ref{dp sigma}) with $\sigma=1$) admits a smooth admissible solution.\\

Let us note that an estimate of the form $\displaystyle{\mathcal{H}_2[u]\geq\delta}$ for an admissible solution of (\ref{dp sigma}) is equivalent to an estimate
\begin{equation}\label{estim nu precise}
\frac{1}{\sqrt{1-|Du|^2}}\leq C'/\sigma
\end{equation}
for some constant $C'$ strictly smaller than $C$ since by (\ref{dp sigma})
\begin{equation}\label{equiv estimates H2 nu}
\mathcal{H}_2[u]\geq\delta\hspace{.5cm}\mbox{ if and only if}\hspace{.5cm} \frac{1}{\sqrt{1-|Du|^2}}\leq \frac{C-\sqrt{\delta}}{\sigma}.
\end{equation}
Note also that (\ref{estim nu precise}) yields the gradient estimate required in (\ref{a priori estimates 1}). We will thus focus below on the proof of the gradient estimate (\ref{estim nu precise}) and afterwards on the estimate of the second derivatives (the $C^0$ estimate is trivial). The $C^{2,\alpha}$ estimate will then follow from the Evans, Krylov and Safonov estimates \cite{GT}.

\subsection{The gradient estimate}
We prove here the gradient estimate (\ref{estim nu precise}) in two steps: we first prove that the gradient cannot attain its maximum at a point interior to the domain, and then deal with the estimate at the boundary. 
\subsubsection{Maximum principle for the gradient}\label{subsection max pple gradient}
We first show that the maximum of 
$$\nu:=\frac{1}{\sqrt{1-|Du|^2}}$$ 
is attained at the boundary of $\Omega,$ i.e. that
\begin{equation}\label{pp max nu}
\sup_{\overline{\Omega}}\nu=\sup_{\partial\Omega}\nu.
\end{equation}
We will do computations on the hypersurface $M=graph(u),$ and use the usual index notation and summation convention for tensors. We will denote, for $F=\mathcal{H}_2^{\frac{1}{2}}$ considered as a function of the shape operator $(h^i_j)_{i,j},$
$$F_i^j:=\frac{\partial F}{\partial h^i_j}\left((h^i_j)_{i,j}\right)\hspace{.3cm}\mbox{and}\hspace{.3cm}F^{ij}:=g^{ik}F_k^j;$$
they are tensors on $M.$ Denoting by $e^0_{n+1}$ the last vector of the canonical basis of $\R^{n,1}$ and by $N$ the upward unit normal to $M,$ we have
\begin{equation}\label{nu T N}
\nu=-\langle e^0_{n+1},N\rangle
\end{equation}
and
\begin{equation}\label{e_n+1 T nu}
e^0_{n+1}=T+\nu N
\end{equation}
where $T:=-\nabla u\ \in TM$  is the opposite of the intrinsic gradient of $u$ on $M.$ We in particular have the formula
\begin{equation}\label{nu grad u}
\nu^2=1+|\nabla u|^2.
\end{equation}
Let us denote by $t^k,$ $k=1,\ldots,n$ the components of the vector $T$ in some frame. Differentiating (\ref{nu T N}) twice we get the formulas
\begin{equation}\label{nui}
\nu_i=-t^kh_{ki}
\end{equation}
and 
\begin{equation}\label{nuij}
\nu_{ij}=-\left(t^kh_{ki;j}+t^k_{;j}h_{ki}\right)
\end{equation}
where we use a semi-colon to denote the covariant derivative of tensors on $M.$ Let us also recall the Gauss formula $u_{ij}=\nu h_{ij},$ which may be written here
\begin{equation}\label{nabla T nu h}
t^k_{;j}=-\nu h^k_j.
\end{equation}
Since the tensor $h_{ij;k}$ is symmetric in all the indices (the Codazzi equations), we thus have
$$F^{ij}\nu_{ij}=-t^kF^{ij}h_{ij;k}+\nu F^{ij}h^k_jh_{ki}.$$
Since $F=C-\sigma\nu,$ we have
$$F^{ij}h_{ij;k}=-\sigma\nu_k,$$
which yields
\begin{equation}\label{Fijnuij}
F^{ij}\nu_{ij}=\sigma t^k\nu_k+\nu F^{ij}h^k_jh_{ki}.
\end{equation}
Since the second term in the right-hand side is positive, it is not possible that simultaneously $\nu_k=0$ and $F^{ij}\nu_{ij}\leq 0,$ and $\nu$ cannot achieve its maximum at a point interior to the domain $\Omega;$ the property (\ref{pp max nu}) follows.\\

\subsubsection{Boundary estimate of the gradient}
We will construct upper and lower barriers at the boundary for the Dirichlet problems (\ref{dp sigma}). The construction will rely on the following results concerning radial solutions :
\begin{lem}\label{lem def y sigma}
For all $\sigma\in [0,1],$ there exists a function $y_{\sigma}:[0,+\infty)\rightarrow\R$ of class $C^1$ solution of
\begin{equation}\label{eqn y sigma}
\frac{2}{n(1-y^2)}\ \frac{y}{r}\left\{\frac{y'}{1-y^2}+\frac{n-2}{2}\frac{y}{r}\right\}=\left(C-\frac{\sigma}{\sqrt{1-y^2}}\right)^2
\end{equation}
on $(0,+\infty)$ and such that
\begin{equation}\label{ineq y sigma}
y_{\sigma}(0)=0\hspace{.5cm}\mbox{and}\hspace{.5cm}0\leq y_{\sigma}<\sqrt{1-\frac{\sigma^2}{C^2}}.
\end{equation}
It satisfies
\begin{equation}\label{lim y sigma infty}
\lim_{r\rightarrow+\infty}y_{\sigma}(r)=\sqrt{1-\frac{\sigma^2}{C^2}}.
\end{equation}
Moreover $y_{\sigma}'>0$ on $[0,+\infty),$ and $y_{\sigma}'(0)=C-\sigma$ for $\sigma\in (0,1].$
\end{lem}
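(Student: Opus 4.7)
The core observation is that, after the factorization
$$\left(C-\frac{\sigma}{\sqrt{1-y^2}}\right)^{\!2}=\sigma^{2}\left(\frac{C}{\sigma}-\frac{1}{\sqrt{1-y^2}}\right)^{\!2},$$
Equation (\ref{eqn y sigma}) for $\sigma\in(0,1]$ is, up to a linear change of the radial variable, a special instance of Equation (\ref{eqn y}) already handled by Proposition \ref{prop ineq y}. Concretely, for $\sigma\in(0,1]$ a direct computation shows that $y_\sigma(r)$ solves (\ref{eqn y sigma}) on $(0,+\infty)$ if and only if $\tilde y(s):=y_\sigma(s/\sigma)$ solves (\ref{eqn y}) on $(0,+\infty)$ with the constant, non-decreasing function $H(s)\equiv C/\sigma$.

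Since $C/\sigma\ge C>1$, the hypotheses of Theorem \ref{thm radial} are trivially satisfied by $H\equiv C/\sigma$, and Proposition \ref{prop ineq y} produces a $C^1$ function $\tilde y:[0,+\infty)\to\R$ with $\tilde y(0)=0$, $0\le\tilde y<\sqrt{1-\sigma^2/C^2}$, $\lim_{s\to+\infty}\tilde y(s)=\sqrt{1-\sigma^2/C^2}$, $\tilde y'>0$ on $[0,+\infty)$, and $\tilde y'(0)=C/\sigma-1$. Setting $y_\sigma(r):=\tilde y(\sigma r)$ immediately yields a $C^1$ solution of (\ref{eqn y sigma}) for which (\ref{ineq y sigma}) and (\ref{lim y sigma infty}) are inherited from the corresponding properties of $\tilde y$, while $y_\sigma'(r)=\sigma\tilde y'(\sigma r)>0$ on $[0,+\infty)$ and $y_\sigma'(0)=\sigma(C/\sigma-1)=C-\sigma$, completing the case $\sigma\in(0,1]$.

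The only case not covered by this rescaling is $\sigma=0$, where (\ref{eqn y sigma}) degenerates to the radial ODE associated with the prescribed constant scalar curvature equation $\mathcal{H}_2[u]^{1/2}=C$; the existence of a radial admissible solution in this case is part of the results of \cite{Bay1,Bay2,U}. If one prefers a self-contained treatment, the regularization scheme of Lemmas \ref{lem v eps}--\ref{lem v} runs essentially verbatim with $v=y^2$ satisfying the auxiliary equation
$$v'=-(n-2)\frac{v(1-v)}{r+\varepsilon}+n(r+\varepsilon)C^2(1-v)^2,$$
the upper barrier $v<1$ (replacing $v<1-1/H^2$) being preserved by the same type of contradiction argument, and the monotonicity, global existence, and limit at infinity arguments going through unchanged. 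The main obstacle is thus this boundary value $\sigma=0$, for which the scaling reduction breaks down; once it is accounted for, every other assertion of the lemma follows mechanically from Proposition \ref{prop ineq y} via the rescaling $y_\sigma(r)=\tilde y(\sigma r)$.
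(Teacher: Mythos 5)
Your proposal is correct and takes essentially the same route as the paper: for $\sigma\in(0,1]$ the paper likewise sets $y_{\sigma}(r):=y(\sigma r)$ where $y$ is the solution of (\ref{eqn y}) given by Proposition \ref{prop ineq y} applied with the constant $C/\sigma$ in place of $C$ and $H\equiv C/\sigma$, and reads off all the stated properties from that proposition. The only difference is the boundary case $\sigma=0$, which the paper settles immediately with the explicit hyperboloid solution $y_0(r)=r/\sqrt{r^2+1/C^2}$ rather than invoking \cite{Bay1,Bay2,U} (which concern the Dirichlet problem, not the entire radial ODE) or re-running the approximation scheme of Lemmas \ref{lem v eps}--\ref{lem v}.
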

\begin{proof}
The case $\sigma=0$ corresponds to the radial prescribed scalar curvature equation and has the trivial solution (corresponding to an hyperboloid) 
$$y_0(r)=\frac{r}{\sqrt{r^2+\frac{1}{C^2}}}.$$
We thus assume that $\sigma\in (0,1]$. Let $y$ be the solution of (\ref{eqn y}) with  $H=C/\sigma$ (Proposition \ref{prop ineq y} with the constant $C/\sigma$ instead of $C$ and $H=C/\sigma$), and set, for $r\in[0,+\infty),$ $y_{\sigma}(r):=y\left(\sigma  r\right).$ It is clearly a solution of (\ref{eqn y sigma}). The other properties of $y_\sigma$ directly follow from the properties of $y.$
\end{proof}
\begin{lem}
If $\sigma,\sigma'\in [0,1]$ are such that $\sigma<\sigma',$ the functions $y_\sigma$ and $y_{\sigma'}$ defined in Lemma \ref{lem def y sigma} are such that
\begin{equation}\label{ineq y sigmas}
y_{\sigma}>y_{\sigma'}\hspace{.5cm}\mbox{on}\hspace{.3cm}(0,+\infty). 
\end{equation}
Moreover, for all $R>0$ there exists a constant $c_0=c_0(n,C,R)>0$ such that 
\begin{equation}\label{ineq y sigma R}
\sqrt{1-\frac{\sigma^2}{(C-c_0)^2}}\geq y_{\sigma}(R)\geq 0.
\end{equation}
for all $\sigma\in(0,1].$
\end{lem}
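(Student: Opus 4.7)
My plan is to handle the two statements separately. For the strict monotonicity (\ref{ineq y sigmas}), the idea is a comparison at a hypothetical first crossing. All $y_\sigma$ vanish at $r=0$, while $y_\sigma'(0)=C-\sigma$ for $\sigma\in(0,1]$ and $y_0'(0)=C$ (from the explicit hyperboloid formula). Hence $y_\sigma'(0)>y_{\sigma'}'(0)$ whenever $\sigma<\sigma'$, so $y_\sigma>y_{\sigma'}$ on a right neighborhood of $0$. Suppose for contradiction that $r_0>0$ is the first point where $y_\sigma(r_0)=y_{\sigma'}(r_0)=:a$; then $y_\sigma'(r_0)\leq y_{\sigma'}'(r_0)$. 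Writing (\ref{eqn y sigma}) at $r_0$ for each parameter and subtracting (only the $y'$-term in the curly bracket depends on the parameter) yields
\[
\frac{2 a}{n r_0 (1-a^2)^2}\bigl(y_\sigma'(r_0)-y_{\sigma'}'(r_0)\bigr) = \left(C-\frac{\sigma}{\sqrt{1-a^2}}\right)^{\!2}-\left(C-\frac{\sigma'}{\sqrt{1-a^2}}\right)^{\!2}.
\]
Setting $u:=1/\sqrt{1-a^2}$, the right-hand side factors as $(\sigma'-\sigma)\,u\,\bigl(2C-(\sigma+\sigma')u\bigr)$. The a priori bound (\ref{ineq y sigma}) applied to $y_{\sigma'}$ gives $a<\sqrt{1-(\sigma')^2/C^2}$, so $u<C/\sigma'<2C/(\sigma+\sigma')$, the last inequality using $\sigma<\sigma'$. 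The right-hand side is therefore strictly positive, forcing $y_\sigma'(r_0)>y_{\sigma'}'(r_0)$, a contradiction.

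For the uniform estimate (\ref{ineq y sigma R}), the lower bound $y_\sigma(R)\geq 0$ is contained in (\ref{ineq y sigma}). For the upper bound, I would drop the non-negative $y_\sigma'/(1-y_\sigma^2)$ term inside the bracket of (\ref{eqn y sigma}) at $r=R$ to obtain
\[
\left(C-\frac{\sigma}{\sqrt{1-y_\sigma(R)^2}}\right)^{\!2}\geq \frac{n-2}{n}\,\frac{y_\sigma(R)^2}{R^{\,2}\bigl(1-y_\sigma(R)^2\bigr)}.
\]
By the monotonicity just proved, $y_\sigma(R)\geq y_1(R)>0$ for every $\sigma\in(0,1]$; since $y\mapsto y/\sqrt{1-y^2}$ is increasing on $[0,1)$, taking square roots gives
\[
C-\frac{\sigma}{\sqrt{1-y_\sigma(R)^2}}\;\geq\; c_0 := \sqrt{\frac{n-2}{n}}\,\frac{y_1(R)}{R\sqrt{1-y_1(R)^2}}>0,
\]
a constant depending only on $n,C,R$. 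Choosing $\sigma=1$ in the displayed inequality also shows $c_0\leq C-1/\sqrt{1-y_1(R)^2}\leq C-1$, so $C-c_0\geq 1\geq \sigma$ for all $\sigma\in(0,1]$. Rearranging and squaring the previous inequality then produces exactly $y_\sigma(R)\leq \sqrt{1-\sigma^2/(C-c_0)^2}$.

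The main delicate point is the sign check in the monotonicity argument: the factor $2C-(\sigma+\sigma')u$ must remain positive at the crossing radius, which is precisely where the sharp bound $y_{\sigma'}<\sqrt{1-(\sigma')^2/C^2}$ combines with the strict ordering $\sigma<\sigma'$ to close the argument. Once the monotonicity is in hand, the uniform estimate (\ref{ineq y sigma R}) is a direct algebraic consequence of the structure of (\ref{eqn y sigma}).
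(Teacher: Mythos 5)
Your proof is correct and follows essentially the same route as the paper: a first-crossing comparison at $r_0$ (using $y_\sigma'(0)=C-\sigma$, resp.\ $y_0'(0)=C$, to start, and the bound $y_{\sigma'}<\sqrt{1-\sigma'^2/C^2}$ to fix the sign of the difference of the right-hand sides), and for the uniform bound, dropping the positive $y'$-term in (\ref{eqn y sigma}) and comparing with $y_1(R)$. Your explicit factorization $(\sigma'-\sigma)u\bigl(2C-(\sigma+\sigma')u\bigr)$ just spells out what the paper leaves as ``readily imply,'' and your slightly different constant $c_0$ serves the same purpose.
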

\begin{proof}
We first prove (\ref{ineq y sigmas}). Using Lemma \ref{lem def y sigma} we see that
$$y_{\sigma}(0)=y_{\sigma'}(0)\hspace{.5cm}\mbox{and}\hspace{.5cm}y_{\sigma}'(0)>y_{\sigma'}'(0),$$
so that (\ref{ineq y sigmas}) holds in a neighborhood of $0.$ By contradiction, if (\ref{ineq y sigmas}) does not hold on $(0,+\infty)$ there exists a first point $r>0$ such that 
\begin{equation}\label{ysigma ysigmap 1}
y_{\sigma}(r)=y_{\sigma'}(r).
\end{equation} 
Since $y_{\sigma}>y_{\sigma'}$ on $(0,r),$ we also have 
\begin{equation}\label{ysigma ysigmap 2}
y_{\sigma}'(r)\leq y_{\sigma'}'(r).
\end{equation}
The conditions (\ref{ysigma ysigmap 1}), (\ref{ysigma ysigmap 2}) and the differential equation (\ref{eqn y sigma}) satisfied by $y_\sigma$ and $y_{\sigma'}$ readily imply that $\sigma\geq\sigma',$ a contradiction. We now prove (\ref{ineq y sigma R}). Let us fix $R>0.$ Since $y_{\sigma}'>0$ on $(0,+\infty),$ we directly have from (\ref{eqn y sigma}) that
$$C-\frac{\sigma}{\sqrt{1-y_{\sigma}(R)^2}}\geq\sqrt{\frac{n-2}{n}}\ \frac{y_{\sigma}(R)}{R}.$$
Since $y_{\sigma}(R)\geq y_1(R)$ (by (\ref{ineq y sigmas})), (\ref{ineq y sigma R}) holds with $c_0:=\sqrt{\frac{n-2}{n}}\ \frac{y_{1}(R)}{R}.$
\end{proof}
We now estimate the gradient on the boundary of the domain:
\begin{prop}
There exists a positive controlled constant $C'<C$ such that
\begin{equation}\label{dp grad est min}
\sup_{\partial\Omega}\left|Du\right|\leq \sqrt{1-\frac{\sigma^2}{C'^2}}
\end{equation}
for all the admissible solutions of the Dirichlet problems (\ref{dp sigma}). The constant $C'$ only depends on $\Omega,$ $C$ and $n.$
\end{prop}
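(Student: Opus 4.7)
The plan is to construct, at each boundary point $x_0\in\partial\Omega$, a radial lower barrier $\underline u$ using the soliton of Lemma \ref{lem def y sigma}, to compare it with $u$ via the standard comparison principle for (\ref{dp sigma}), and to read off an upper bound for the normal derivative of $u$ at $x_0$.

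First I would observe that an admissible solution $u$ of (\ref{dp sigma}) is strictly convex -- the formula $\partial^2_{ij}u=\sqrt{1-|Du|^2}\,h_{ij}$ together with $\mathcal{H}_1,\mathcal{H}_2>0$ gives positive Hessian -- so $u\le 0$ in $\overline{\Omega}$. Since $u\equiv 0$ on $\partial\Omega$, the tangential derivatives of $u$ vanish there, and $|Du(x_0)|=\partial_\nu u(x_0)\ge 0$, with $\nu$ denoting the outward unit normal. Next, by the uniform convexity and boundedness of $\Omega$, there is a controlled constant $R=R(\Omega)>0$ such that for every $x_0\in\partial\Omega$, writing $z_0:=x_0-R\nu(x_0)$, one has the enclosing property $\overline{\Omega}\subset\overline{B_R(z_0)}$ with $\partial B_R(z_0)\cap\partial\Omega=\{x_0\}$.

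I would then introduce the radial lower barrier
$$\underline u(x):=\int_0^{|x-z_0|}y_\sigma(s)\,ds-\int_0^R y_\sigma(s)\,ds,$$
which is an entire admissible radial solution of the equation in (\ref{dp sigma}) with $\underline u(x_0)=0$. Since $y_\sigma\ge 0$ and $\overline{\Omega}\subset\overline{B_R(z_0)}$, one has $\underline u\le 0=u$ on $\partial\Omega$. Because the equation in (\ref{dp sigma}) has no zeroth-order dependence on $u$ and is elliptic on admissible functions, the classical comparison principle -- obtained by linearising the operator $\mathcal{H}_2^{1/2}+\sigma/\sqrt{1-|D\cdot|^2}$ along a segment from $\underline u$ to $u$ and applying the linear strong maximum principle -- yields $\underline u\le u$ in $\overline{\Omega}$, with equality at $x_0$. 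Hence $u-\underline u\ge 0$ attains its minimum at $x_0$, so its inward normal derivative is non-negative there, giving $\partial_\nu u(x_0)\le\partial_\nu\underline u(x_0)=y_\sigma(R)$. Combining this with the first paragraph and the bound (\ref{ineq y sigma R}),
$$|Du(x_0)|=\partial_\nu u(x_0)\le y_\sigma(R)\le\sqrt{1-\frac{\sigma^2}{(C-c_0)^2}}$$
with $c_0=c_0(n,C,R)>0$; setting $C':=C-c_0$ (depending only on $\Omega$, $C$ and $n$) would prove (\ref{dp grad est min}).

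The main point to handle carefully is the comparison between the two admissible solutions $u$ and $\underline u$, which requires verifying that a nontrivial interior touching is excluded via linearisation of the fully non-linear operator along a segment connecting them; the geometric enclosing-ball property and the boundary computation $\partial_\nu\underline u(x_0)=y_\sigma(R)$ are by contrast routine.
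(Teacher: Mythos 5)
Your barrier construction with the enclosing tangent ball and the radial soliton of Lemma \ref{lem def y sigma} is exactly the paper's mechanism for the lower bound $\partial_\nu u(x_0)\le y_\sigma(R)\le\sqrt{1-\sigma^2/(C-c_0)^2}$, and that part is fine. The gap is in your very first step: you claim that an admissible solution of (\ref{dp sigma}) is strictly convex because $\mathcal{H}_1[u]>0$ and $\mathcal{H}_2[u]>0$ ``give positive Hessian.'' This is false for $n\ge 3$ (the regime of the paper): admissibility only places the principal curvature vector in the G\aa rding cone $\{\sigma_1>0,\ \sigma_2>0\}$, which contains vectors with negative entries, e.g. $(3,3,-1)$ for $n=3$. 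So $\mathcal{H}_1,\mathcal{H}_2>0$ does not force $\lambda_i>0$ for all $i$, hence not $D^2u>0$. Indeed, convexity of solutions to this scalar curvature soliton equation is not automatic; the paper proves strict convexity only for the radial solutions of Theorem \ref{thm radial}, and explicitly notes (Remark, item 6) that convexity-type structure results known for the mean curvature flow solitons are open here. Consequently your deduction ``$u$ convex, hence $u\le 0$, hence $\partial_\nu u(x_0)\ge 0$'' is unsupported as written, and without it you only control $\partial_\nu u(x_0)$ from above, not $|Du(x_0)|$.

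The missing one-sided bound can be recovered without convexity. Either argue directly by the maximum principle: if $u$ had a positive interior maximum at $x_1$, then $Du(x_1)=0$ and $D^2u(x_1)\le 0$, so all principal curvatures of the graph are $\le 0$ there, contradicting $\mathcal{H}_1[u]>0$; hence $u\le 0$ in $\overline\Omega$ and $\partial_\nu u\ge 0$ on $\partial\Omega$. Or do what the paper does: use the uniform convexity to place an \emph{interior} tangent ball $B(x_0^+,R^+)\subset\Omega$ at each $x_0\in\partial\Omega$ and the corresponding radial solution $v^+_\sigma$ (same formula as your $\underline u$, centered at $x_0^+$) as an \emph{upper} barrier, which yields $\partial_\nu u(x_0)\ge y_\sigma(R^+)\ge 0$. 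With either repair, your argument closes and coincides with the paper's proof; note also that your appeal to ``linearising along the segment'' for the comparison between the two admissible functions is stated at the same informal level as the paper's invocation of the comparison principle, so I do not count it as an additional gap.
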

\begin{proof}
Since $\Omega$ is uniformly convex we may find uniform constants $R^+,R^->0$ and, for each boundary-point $x_0\in\partial\Omega,$ two balls $B(x_0^+,R^+)$ and $B(x_0^-,R^-)$ tangent to $\partial\Omega$ at $x_0$ and satisfying 
\begin{equation}\label{inclusion balls}
B(x_0^+,R^+)\ \subset\ \Omega\ \subset\ B(x_0^-,R^-);
\end{equation}
$x_0^+,$ $x_0^-\in\R^n$ stand for the centers and $R^+,R^-$ for the radii of the balls. We consider, for a fixed $x_0\in\partial\Omega,$ the radial functions 
$$v^{\pm}_{\sigma}(x)=\int_0^{|x-x_0^{\pm}|}y_{\sigma}(r)dr-\int_0^{R^{\pm}}y_{\sigma}(r)dr$$ 
of center $x_0^{\pm};$ by construction, $v^{\pm}_{\sigma}$ are solutions of the prescribed curvature equation (\ref{dp sigma}) on $\R^n$ and vanish on $\partial B(x_0^{\pm},R^{\pm}).$ In particular, they are upper and lower barriers of $u$ at $x_0:$ since $u$ vanishes on $\partial\Omega$ and by (\ref{inclusion balls}) we have $v^{+}_{\sigma}\geq u\geq v^{-}_{\sigma}$ on $\partial\Omega$ and $v^{+}_\sigma(x_0)=v^{-}_\sigma(x_0)=u(x_0),$ and thus $v^{+}_\sigma\ge u\geq v^{-}_\sigma$ on $\Omega$ by the comparison principle. Denoting by $\partial_{\vec{n}}$ the derivative with respect to the inner unit normal to $\partial\Omega$ at $x_0,$ this implies that 
$$\partial_{\vec{n}}v^{+}_{\sigma}(x_0)\geq \partial_{\vec{n}}u(x_0)\geq \partial_{\vec{n}}v^{-}_{\sigma}(x_0).$$ 
Using (\ref{ineq y sigma R}) we deduce that
$$0\geq \partial_{\vec{n}}u(x_0)\geq -y_{\sigma}(R^-)\geq-\sqrt{1-\frac{\sigma^2}{(C-c_0)^2}}$$
and (\ref{dp grad est min}) follows with $C'=C-c_0$ since $u$ is constant on the boundary. The constant $C'$ depends on $n,C$ and $R^-.$
\end{proof}
\subsubsection{Global gradient estimate}
We readily deduce from the last proposition and (\ref{pp max nu}) the global bound (\ref{estim nu precise}) of the gradient:
\begin{prop}
There exists a controlled constant $C'<C$ such that
\begin{equation}\label{dp grad est}
\nu\leq C'/\sigma
\end{equation}
for all the admissible solutions of the Dirichlet problems (\ref{dp sigma}). The constant $C'$ only depends on $\Omega,$ $C$ and $n.$
\end{prop}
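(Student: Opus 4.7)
The plan is essentially to assemble the two ingredients that precede this statement. The quantity $\nu = 1/\sqrt{1-|Du|^2}$ has already been analyzed twice: once via an interior maximum-principle argument in Subsection \ref{subsection max pple gradient}, and once via radial barriers on the boundary in the immediately preceding proposition. So my proof will just be a one-line combination.

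First I would invoke the interior maximum principle (\ref{pp max nu}), which asserts $\sup_{\overline{\Omega}}\nu = \sup_{\partial\Omega}\nu$ for any admissible solution of (\ref{dp sigma}). Recall this followed from (\ref{Fijnuij}): at an interior critical point of $\nu$ one would have $\nabla\nu=0$ and $F^{ij}\nu_{ij}\leq 0$, but (\ref{Fijnuij}) then forces $\nu F^{ij}h^k_j h_{ki}\leq 0$, which is impossible since $F^{ij}$ is positive definite on admissible functions and the term $h^k_j h_{ki}$ is positive semidefinite (and cannot vanish for an admissible solution of (\ref{dp sigma})).

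Second I would apply the preceding proposition, which produces a controlled constant $C'<C$, depending only on $\Omega$, $C$ and $n$, such that
$$\sup_{\partial\Omega}|Du|\leq\sqrt{1-\frac{\sigma^2}{C'^2}}.$$
By the elementary equivalence already pointed out between bounds on $|Du|$ and bounds on $\nu$, this is the same as $\nu\leq C'/\sigma$ on $\partial\Omega$. Combining with the interior maximum principle gives $\nu\leq C'/\sigma$ on $\overline{\Omega}$, as required.

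There is essentially no obstacle here; the content of the proof lies entirely in the two preceding subsections (the maximum principle for $\nu$, and the barrier construction using the radial solutions $y_\sigma$ of Lemma \ref{lem def y sigma}). The present proposition is a packaging statement, and my proof will simply cite (\ref{pp max nu}) and (\ref{dp grad est min}) and note that the resulting constant $C'$ inherits its dependence on $\Omega$, $C$ and $n$ from the boundary estimate.
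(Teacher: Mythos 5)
Your proposal is correct and is exactly the paper's argument: the paper states this proposition as an immediate consequence of the interior maximum principle (\ref{pp max nu}) and the boundary estimate (\ref{dp grad est min}), using the same equivalence between the bound on $|Du|$ at the boundary and $\nu\leq C'/\sigma$. Your added justification of the strict positivity of $\nu F^{ij}h^k_jh_{ki}$ (via ellipticity and $\mathcal{H}_2[u]>0$, so $h\neq 0$) is consistent with what the paper already established in Subsection \ref{subsection max pple gradient}.
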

Recall that this estimate also implies the required estimate (\ref{a priori estimates 2}) (see (\ref{equiv estimates H2 nu})).

\subsection{The $C^2$ estimate}
\subsubsection{Maximum principle for the second derivatives}\label{section max pp C2}
The estimate relies on the $C^2$ estimate of Urbas in \cite{U}. We assume that the estimate (\ref{a priori estimates 2}) holds (it is equivalent to (\ref{dp grad est})), which implies that the largest principal curvature $\lambda_1$ of the graph of $u$ is bounded below, since we have
$$\lambda_1\geq\mathcal{H}_1\geq \mathcal{H}_2^{\frac{1}{2}}$$
as a consequence of the admissibility of $u.$ The aim is to bound $\lambda_1$ from above. Let us write the prescribed curvature equation (\ref{dp sigma}) in the form
\begin{equation}\label{eqn not Urbas}
F=\psi
\end{equation}
with $F=\mathcal{H}_2^{\frac{1}{2}}$ and $\psi=C-\sigma\nu.$ The method in \cite{U} consists in applying the maximum principle for the test function
\begin{equation}\label{def W tilde}
\widetilde{W}(x,\xi)=\eta^{\beta}\sum_{i,j}h_{ij}\xi_i\xi_j
\end{equation}
defined for $x\in M$ and $\xi\in T_xM$ such that $|\xi|=1,$ with $\eta=\exp(\Phi)$ where $\Phi:\overline{\Omega}\rightarrow\R$ is a spacelike strictly convex function and $\beta>0$ is a constant to be chosen sufficiently large. In \cite{U} the function $\psi$ in the right-hand side of (\ref{eqn not Urbas}) does not depend on the gradient of the unknown function $u;$ here $\psi$ depends on the gradient, and we have to ensure that the third derivatives of $u$ which appear differentiating (\ref{eqn not Urbas}) twice may still be controlled; this is the aim of the following lemma: 
\begin{lem}\label{lem est W tilde}
Assume that $\widetilde{W}$ reaches its maximum at $(x_0,e_1)\in TM.$ We have the estimate
$$\frac{\psi_{11}}{\lambda_1}\geq -c(\beta+\lambda_1)$$
where $c$ is a controlled constant and $\lambda_1$ is the largest principal curvature of the graph of $u$ at $x_0.$ 
\end{lem}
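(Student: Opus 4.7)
I will follow the standard strategy of turning the maximum condition for $\widetilde W$ into a differential identity at $x_0$, and then reading off the estimate by directly computing $\psi_{ij}$ from $\psi = C - \sigma\nu$ via the formulas \eqref{nui}--\eqref{nabla T nu h} already established in Section \ref{subsection max pple gradient}.

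First, I choose a normal orthonormal frame $(e_1,\ldots,e_n)$ at $x_0$ which is parallel at $x_0$ and diagonalises the shape operator, so that $h_{ij}(x_0)=\lambda_i\delta_{ij}$ with $\lambda_1$ the largest principal curvature. The hypothesis that $\widetilde W$ attains its maximum at $(x_0,e_1)$ means that the smooth auxiliary function $x\mapsto \eta(x)^\beta h_{11}(x)$ (defined via this frame, extended by parallel transport) has a local maximum at $x_0$; in particular its gradient vanishes, which yields the first-order relation
\begin{equation}\label{plan first order}
h_{11;k}(x_0) \;=\; -\beta\,\frac{\eta_k}{\eta}(x_0)\,\lambda_1,\qquad k=1,\ldots,n.
\end{equation}

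Next I compute $\psi_{11}$. Since $\psi = C - \sigma\nu$, the formulas \eqref{nuij} and \eqref{nabla T nu h} give
$$\psi_{ij} \;=\; \sigma\,t^k h_{ki;j} \;-\; \sigma\,\nu\, h^k_j h_{ki}.$$
Evaluating at $(x_0,e_1)$ in the chosen frame and using $h_{ki}(x_0)=\lambda_i\delta_{ki}$,
$$\psi_{11} \;=\; \sigma\sum_k t^k h_{k1;1} \;-\; \sigma\nu\,\lambda_1^2.$$
Now the Codazzi equations give $h_{k1;1}=h_{11;k}$, so substituting \eqref{plan first order} yields
$$\psi_{11} \;=\; -\,\sigma\beta\,\lambda_1\,\frac{t^k\eta_k}{\eta} \;-\; \sigma\nu\,\lambda_1^2.$$

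Finally, dividing by $\lambda_1>0$,
$$\frac{\psi_{11}}{\lambda_1} \;=\; -\,\sigma\beta\,\frac{t^k\eta_k}{\eta} \;-\; \sigma\nu\,\lambda_1.$$
Each factor appearing on the right is controlled: $\sigma\le 1$, $\nu$ is bounded by the global gradient estimate \eqref{dp grad est}, the vector $T=-\nabla u$ satisfies $|T|^2=\nu^2-1$ by \eqref{nu grad u} and is therefore bounded, while $\eta=e^\Phi$ together with $\eta_k/\eta=\Phi_k$ is bounded in terms of the fixed strictly convex function $\Phi$. Hence there is a controlled constant $c$ with $\psi_{11}/\lambda_1\ge -c(\beta+\lambda_1)$, which is the desired estimate.

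The only delicate point is the smooth representation of $\lambda_1$ used in \eqref{plan first order}: if $\lambda_1$ is not simple at $x_0$ one replaces it by $h_{11}$ in the parallel frame just described, which still bounds the true eigenvalue from below nearby and coincides with $\lambda_1$ at $x_0$, so the first-order condition for $\eta^\beta h_{11}$ remains valid and the above computation is unchanged. The rest of the argument is purely algebraic manipulation of the covariant derivatives, with no further obstacle.
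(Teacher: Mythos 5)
Your argument is correct and is essentially the paper's own proof: you use the vanishing-gradient condition for $\eta^{\beta}h_{11}$ to get $h_{11;k}=-\beta(\eta_k/\eta)\lambda_1$, compute $\psi_{11}=-\sigma\nu_{11}$ from \eqref{nuij}, \eqref{nabla T nu h} and the Codazzi symmetry $h_{k1;1}=h_{11;k}$, and conclude using that $\sigma$, $\nu$, $T$ and $\eta_k/\eta$ are controlled once the gradient is. The only (harmless) differences are that you keep the first-order relation as an equality where the paper only uses the bound $|h_{11;i}|\le c\beta\lambda_1$, and that you spell out the standard remark about $\lambda_1$ possibly not being a simple eigenvalue, which the paper delegates to \cite{U}.
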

Note that the corresponding estimate used in \cite{U} (for $\psi$ independent of the gradient but arbitrarily given) is $\psi_{11}/\lambda_1\geq -c(1+\lambda_1)$ (see inequality (2.8) in \cite{U}).
\begin{proof}
By definition of $\widetilde{W}$ the vector $e_1$ is a principal direction of $M$ corresponding to the largest principal curvature $\lambda_1$ at $x_0.$ Let us assume that $e_1,e_2, \ldots,e_n$ is an orthonormal basis of $T_{x_0}M$ formed by principal directions. By (\ref{nuij}) and since $h_{k1;1}=h_{11;k}$ (Codazzi equations), we have 
\begin{equation}\label{est psi11 1}
\psi_{11}=-\sigma\nu_{11}=\sigma( t^kh_{11;k}+t^k_{;1}h_{k1}).
\end{equation}
We estimate the first term in the right-hand side: the condition expressing that $\widetilde{W}$ reaches an extremum at $(x_0,e_1)$ is
$$\beta\frac{\eta_i}{\eta}+\frac{h_{11;i}}{h_{11}}=0$$
for $i=1,\ldots,n$ (see details in \cite{U}). Since $\eta_i/\eta$ is bounded (the gradient is supposed to be controlled), we get
\begin{equation}\label{est psi11 3}
|h_{11;i}|\leq c\beta\lambda_1
\end{equation}
for a controlled constant $c.$ We then note that by (\ref{nabla T nu h}) the last term of (\ref{est psi11 1}) is
\begin{equation}\label{est psi11 2}
t^k_{;1}h_{k1}=-\nu h_1^kh_{k1}=-\nu\lambda_1^2.
\end{equation}
Formula (\ref{est psi11 1}) with the estimates (\ref{est psi11 3}) and (\ref{est psi11 2}) implies the lemma ($T=-\nabla u$ and $\nu=\sqrt{1+|\nabla u|^2}$ are under control since so is the gradient).
\end{proof}
With this lemma, we may follow the lines of the $C^2$ estimate in \cite{U} and obtain the $C^2$ estimate in our case as well, without any relevant modification: the final key estimate (2.16) in \cite{U} is replaced here by
$$0\geq\frac{c_2\beta\lambda_1}{\eta}+f_1\lambda_1^2-\frac{c(\beta+\beta^2)f_1}{\eta^2}-\frac{c\beta}{\eta}-c(\beta+\lambda_1)$$
with $f_1=\partial \sigma_2^{1/2}/\partial\lambda_1,$ and we obtain a bound for $\eta^\beta\lambda_1$ by fixing $\beta\geq 1$ large such that
$$\frac{c_2\beta}{\eta}\geq 2c.$$
\begin{rem}\label{rem int C2} 
As in \cite{U} this estimate may also be carried out if $\eta$ is a function such that $\eta=0$ on $\partial\Omega$ and $\eta>0$ in $\Omega.$ We will use this fact for the interior $C^2$ estimate in Section \ref{section entire solutions}, required in the construction of entire solutions.
\end{rem}
\subsubsection{Boundary estimate of the second derivatives} 
The $C^2$ estimate at the boundary is a consequence of the $C^2$ boundary estimate in \cite{Bay2}, since the estimate in that paper is not affected if the right-hand term in the prescribed curvature equation $\mathcal{H}_m=H$ depends on the gradient of $u.$ More precisely, the prescribed curvature equation is written in \cite{Bay2} in the form
\begin{equation}\label{forme equation article dirichlet}
\sum_{I,J}A^{IJ}(Du)(D^2u)_{IJ}=f(x,Du)
\end{equation}
with
\begin{equation}\label{forme f article dirichlet}
f(x,p)=\frac{n!}{m!(n-m)!}(1-|p|^2)^{\frac{m}{2}}\ H(x)
\end{equation}
where $A^{IJ}(Du)$ and $(D^2u)_{IJ}$ are the minors of indices the $m$-tuples $I,J$ of, respectively, $(\delta_{ij}+\frac{u_iu_j}{1-|Du|^2})_{ij}$ and the hessian $D^2u.$ 
It appears that the $C^2$ boundary estimate in that paper is independent of the special form (\ref{forme f article dirichlet}) and only requires that $f$ is bounded below by a controlled positive constant and that $f$ and its first derivatives are bounded functions on the set $\overline{\Omega}\times \overline{B}(0,1-\theta)$ where $\theta\in (0,1)$ is a constant such that $\sup_{\Omega}|Du|\leq 1-\theta.$ We need here to estimate solutions of (\ref{forme equation article dirichlet}) with  
$$f(x,p)=\frac{n!}{m!(n-m)!}(1-|p|^2)^{\frac{m}{2}}\left(C-\frac{\sigma}{\sqrt{1-|p|^2}}\right)^2$$
where $\sigma$ belongs to $(0,1].$ We have by (\ref{dp grad est}) that 
$$f(x,Du)\geq  \frac{n!}{m!(n-m)!}\frac{\sigma^m}{C'^m}(C-C')^2$$
for a solution $u$ of the Dirichlet problem (\ref{dp sigma}), and the right-hand term may be bounded below independently of $\sigma$ on some neighborhood of a fixed $\sigma_0\in (0,1],$ e.g. on $(\sigma_0/2,1].$ Moreover $f$ and its derivatives are clearly bounded on $\overline{\Omega}\times\overline{B}(0,\sqrt{1-1/C^2}).$ So the $C^2$ boundary estimate in \cite{Bay2} applies here and gives the required estimate.
\subsubsection{Global $C^2$ estimate}
Gathering the results obtained above we get the following $C^2$ estimate on $\overline{\Omega}:$
\begin{prop}
Let us fix $\sigma_0\in (0,1].$ There exists a controlled constant $C''$ such that
\begin{equation}\label{global C2 est}
\sup_{\overline{\Omega}}|D^2u|\leq C''
\end{equation}
for all the admissible solutions of the Dirichlet problems (\ref{dp sigma}) with $\sigma\in (\sigma_0/2,1].$ The constant $C''$ only depends on $\Omega,C,C',n$ and $\sigma_0.$
\end{prop}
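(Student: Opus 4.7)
The plan is to combine an interior maximum-principle bound on the largest principal curvature of the graph with the boundary $C^2$ estimate from \cite{Bay2}. Since the gradient bound (\ref{dp grad est}) is already in hand, the $C^{2,\alpha}$ result will follow from (\ref{global C2 est}) by Evans--Krylov and Schauder theory. Admissibility together with (\ref{a priori estimates 2}) gives the lower bound $\lambda_1\geq \mathcal{H}_1 \geq \mathcal{H}_2^{1/2}\geq \sqrt{\delta}$, so only an upper bound on the largest principal curvature $\lambda_1$ of $\mathrm{graph}(u)$ is required; coupled with the gradient estimate, this will yield the desired bound on $|D^2u|$.

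For the interior, I would apply the maximum principle to the test function $\widetilde{W}(x,\xi)=\eta^{\beta}h_{ij}\xi^i\xi^j$ exactly as in Section \ref{section max pp C2}, with $\eta=\exp\Phi$ for a fixed spacelike strictly convex function $\Phi$ on $\overline{\Omega}$ and $\beta>0$ to be chosen large. If the maximum of $\widetilde W$ is attained at an interior $(x_0,e_1)$ with $e_1$ a principal direction for $\lambda_1(x_0)$, one differentiates (\ref{eqn not Urbas}) twice in the direction $e_1$ and uses Lemma \ref{lem est W tilde} to tame the term $\psi_{11}/\lambda_1$. Following the Urbas computation literally, this leads to the inequality
$$0\geq\frac{c_2\beta\lambda_1}{\eta}+f_1\lambda_1^2-\frac{c(\beta+\beta^2)f_1}{\eta^2}-\frac{c\beta}{\eta}-c(\beta+\lambda_1),$$
and choosing $\beta\geq 1$ large enough that $c_2\beta/\eta\geq 2c$ produces an upper bound on $\eta^{\beta}\lambda_1(x_0)$, hence on $\lambda_1(x_0)$, by controlled constants. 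Thus the interior maximum of $\lambda_1$ is controlled by its boundary maximum up to a controlled constant.

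For the boundary, I would invoke the $C^2$ boundary estimate in \cite{Bay2}. Writing (\ref{dp sigma}) in the form (\ref{forme equation article dirichlet}) with
$$f(x,p)=\frac{n!}{m!(n-m)!}(1-|p|^2)^{m/2}\left(C-\frac{\sigma}{\sqrt{1-|p|^2}}\right)^{2},$$
the hypotheses of \cite{Bay2} reduce to: $f$ bounded below by a positive controlled constant and $f,\partial_x f,\partial_p f$ bounded on $\overline{\Omega}\times\overline{B}(0,1-\theta)$. The lower bound follows from (\ref{dp grad est}) together with $\sigma>\sigma_0/2$, which forces $(C-\sigma/\sqrt{1-|Du|^2})^2\geq\sigma_0^2(C-C')^2/(4C'^2)$ on admissible solutions; the boundedness of $f$ and its derivatives is automatic on the spacelike ball $\overline{B}(0,\sqrt{1-1/C^2})$. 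Combining this boundary estimate with the interior bound yields (\ref{global C2 est}). The main obstacle, and the only genuine novelty relative to \cite{U} and \cite{Bay2}, is the $Du$-dependence of the right-hand side of (\ref{dp sigma}): because $\psi=C-\sigma\nu$ depends on $u$ and its gradient, the second differentiation of (\ref{eqn not Urbas}) generates third derivatives of $u$ that are absent in the Urbas setup, and controlling them is exactly the content of Lemma \ref{lem est W tilde}; once this lemma is available, everything else is a routine adaptation.
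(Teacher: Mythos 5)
Your proposal is correct and follows essentially the same route as the paper: the Urbas-type maximum principle for $\widetilde{W}=\eta^{\beta}h_{ij}\xi^i\xi^j$ with Lemma \ref{lem est W tilde} absorbing the gradient dependence of $\psi=C-\sigma\nu$, combined with the boundary $C^2$ estimate of \cite{Bay2} after checking that $f$ is bounded below by a controlled positive constant (via (\ref{dp grad est}) and $\sigma>\sigma_0/2$) and that $f$ and its derivatives are bounded on the relevant spacelike ball. The only quibble is cosmetic: your displayed lower bound $\sigma_0^2(C-C')^2/(4C'^2)$ really belongs to the full product $f(x,Du)$ (the factor $(1-|Du|^2)^{m/2}\geq(\sigma_0/2C')^m$ with $m=2$ times $(C-\sigma\nu)^2\geq(C-C')^2$), not to $(C-\sigma/\sqrt{1-|Du|^2})^2$ alone, but the resulting controlled constant is the same as in the paper.
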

\section{Entire solitons of prescribed values at infinity: proof of Theorem \ref{main thm}}\label{section entire solutions}
\subsection{The method of resolution}
We will first construct lower and upper barriers $\underline{u},\overline{u}:\R^n\rightarrow \R$ with $\underline{u}$ strictly convex, $\underline{u}<\overline{u},$ and such that 
\begin{equation}\label{asympt under over u}
\underline{u}(x),\overline{u}(x)=\widetilde{C}|x|-\frac{1}{C^2}\sqrt{\frac{n-2}{n}}\log|x|+f\left(\widetilde{C}\frac{x}{|x|}\right)+o(1)
\end{equation}
as $|x|$ tends to infinity. They will be defined as supremum and infimum of families of radial solutions of (\ref{soliton equation C}). Let $\Phi:\R^n\rightarrow\R$ be a strictly convex smooth function such that $\underline{u}\leq\Phi<\overline{u}.$ We consider the uniformly convex domains $\Omega_m:=\Phi^{-1}(-\infty,m),$ and a sequence $(u_m)_{m\in\N}$ of smooth admissible solutions of the Dirichlet problems
\begin{equation}\label{seq d problems}
\left\{\begin{array}{rcl}
\mathcal{H}_2[u]^{\frac{1}{2}}&=&C-\frac{1}{\sqrt{1-|Du|^2}}\hspace{.5cm} \mbox{in}\ \Omega_m\\
u&=&m\hspace{.5cm} \mbox{on}\ \partial \Omega_m
\end{array}\right.
\end{equation}
whose existence (and uniqueness) is granted by Theorem \ref{thm dp}. We have 
\begin{equation}\label{um between barriers}
\underline{u}\leq u_m\leq \overline{u}
\end{equation} 
on $\Omega_m.$ We need the following interior a priori estimates of the gradient and the higher derivatives of $u_m:$ if $K\subset\R^n$ is a compact subset,
\begin{equation}\label{required a priori interior gradient estimate}
\sup_K \frac{1}{\sqrt{1-|Du_m|^2}}\leq C_K<C,
\end{equation}
and, for all $i\in\N,$
\begin{equation}\label{required a priori interior Ci estimate}
 \sup_K \sum_{|\alpha|=i}\left|\partial^{\alpha}u_m\right|\leq C(K,i)
\end{equation}
for constants $C_K$ and $C(K,i)$ independent of $m\in\N.$ With these estimates at hand, the Arzela-Ascoli theorem and a diagonal process give an entire spacelike smooth function $u:\R^n\rightarrow \R$ solution of (\ref{soliton equation C}) between the barriers, thus satisfying the asymptotic condition (\ref{asymptotic u f}).
\subsection{The construction of the barriers}
The ideas of the construction rely on the papers \cite{T,SX} concerning the mean curvature operator $\mathcal{H}_1.$ Since the proofs are identical for the scalar curvature operator $\mathcal{H}_2,$ we only recall the definitions and refer to these papers for details (such a construction was also used for $\mathcal{H}_2$ in \cite{Bay3,Bay4}): we extend $f$ to $\R^n\backslash\{0\}$ by setting $f(\widetilde{C}x)=f(\widetilde{C}x/|x|),$ let $M$ be a constant such that
$$\left|f(\widetilde{C}x)-f(\widetilde{C}y)-df_{\widetilde{C}y}(\widetilde{C}x-\widetilde{C}y)\right|\leq M\left|\widetilde{C}x-\widetilde{C}y\right|^2$$
for all $x,y\in\mathbb{S}^{n-1},$ and set
$$p_1(\widetilde{C}y):=Df(\widetilde{C}y)+2M\widetilde{C}y,\hspace{.5cm} p_2(\widetilde{C}y):=Df(\widetilde{C}y)-2M\widetilde{C}y.$$
Let us consider the radial solution $\psi:\R^n\rightarrow\R$ of (\ref{soliton equation C}) such that
$$\psi(x)= \widetilde{C}|x|-\frac{1}{C^2}\sqrt{\frac{n-2}{n}}\log|x|+o(1)$$
as $|x|$ tends to $+\infty$ (Theorem \ref{thm radial}). We set
$$\underline{z}(x,\widetilde{C}y):=f(\widetilde{C}y)-p_1(\widetilde{C}y)\cdot\widetilde{C}y+\psi(x+p_1(\widetilde{C}y))$$
and
$$\overline{z}(x,\widetilde{C}y):=f(\widetilde{C}y)-p_2(\widetilde{C}y)\cdot\widetilde{C}y+\psi(x+p_2(\widetilde{C}y)).$$
They satisfy the following properties: for all $x,y\in\mathbb{S}^{n-1},$
$$f(\widetilde{C}x)\geq \underline{z}(rx,\widetilde{C}y)-\widetilde{C}r+\frac{1}{C^2}\sqrt{\frac{n-2}{n}}\log r$$
and
$$f(\widetilde{C}x)\leq \overline{z}(rx,\widetilde{C}y)-\widetilde{C}r+\frac{1}{C^2}\sqrt{\frac{n-2}{n}}\log r$$
as $r\rightarrow +\infty$ (see \cite{SX}). Setting, for $x\in\R^n,$
$$\underline{u}(x):=\sup_{y\in\mathbb{S}^{n-1}} \underline{z}(x,\widetilde{C}y)\hspace{.5cm}\mbox{and}\hspace{.5cm}\overline{u}(x):=\inf_{y\in\mathbb{S}^{n-1}} \overline{z}(x,\widetilde{C}y)$$
we obtain barriers $\underline{u},\overline{u}:\R^n\rightarrow\R,$ with $\underline{u}$ strictly convex, so that $\underline{u}\leq\overline{u}$ (by the maximum principle) and (\ref{asympt under over u}) holds. Note finally that we may assume that $\underline{u}<\overline{u}$ on $\R^n,$ since otherwise it would exist $x\in\R^n$ and $y_1,y_2\in\mathbb{S}^{n-1}$ so that $\underline{z}(x,\widetilde{C}y_1)= \overline{z}(x,\widetilde{C}y_2);$ we would then have $\underline{z}(.,\widetilde{C}y_1)\equiv \overline{z}(.,\widetilde{C}y_2)$ on $\R^n$ (by the strong maximum principle) and the function
$$u:=\underline{z}(.,\widetilde{C}y_1)=\overline{z}(.,\widetilde{C}y_2)$$
would be a solution of (\ref{soliton equation C})-(\ref{asymptotic u f}) in Theorem \ref{main thm}.
\subsection{The a priori estimates}
The interior $C^0$ estimate is trivial by (\ref{um between barriers}). We will focus below on the interior gradient and $C^2$ estimates. The higher order estimates will then follow from the Evans-Krylov and Schauder estimates.
\subsubsection{The interior gradient estimate}
Let $K\subset\R^n$ be a fixed compact subset. We aim to obtain a bound
\begin{equation}\label{required int est nu}
\sup_K\nu\leq C_K
\end{equation}
for a controlled constant $C_K<C,$ for all the solutions of the sequence of Dirichlet problems (\ref{seq d problems}) with $m\geq m_K$ sufficiently large. We construct a smooth auxiliary function $\psi:\overline{B}_R\rightarrow\R$ defined on a closed ball $\overline{B}_R$ which contains $K,$  which is strictly convex, spacelike and such that
\begin{equation}\label{def Cpsi}
C_{\psi}:=\frac{1}{\sqrt{1-\sup_{\overline{B}_R}|D\psi|^2}}<C,
\end{equation}
\begin{equation}\label{pty psi est grad int}
\psi\leq \underline{u}-\delta_0\hspace{.3cm}\mbox{in}\hspace{.2cm}K\hspace{.5cm}\mbox{and}\hspace{.5cm}\psi\geq \overline{u}\hspace{.3cm}\mbox{on}\hspace{.2cm}\partial B_R
\end{equation}
for some constant $\delta_0>0.$ For the construction of $\psi,$ let us fix $\delta_0>0$ and $R_0$ such that $K\subset \overline{B}_{R_0},$ and set, for $\widetilde{C}:=\sqrt{1-1/C^2},$
$$\psi(x):=-A_0+\widetilde{C}\sqrt{1+|x|^2}$$
with
$$A_0:=-\inf_{\overline{B}_{R_0}}\underline{u}+\widetilde{C}\sqrt{1+R_0^2}+\delta_0.$$ 
We then have
$$\inf_{\overline{B}_{R_0}}\underline{u}-\sup_{\overline{B}_{R_0}}\psi=\delta_0$$
which in particular implies
$$\underline{u}-\psi\geq\delta_0$$
on $K.$ We moreover have, for all $R>R_0,$
$$\sup_{\overline{B}_{R}}|D\psi|=\widetilde{C}\frac{R}{\sqrt{1+R^2}}<\widetilde{C},$$
and, since
$$\psi(x)=\widetilde{C}|x|+O(1)\hspace{.5cm}\mbox{and}\hspace{.5cm}\overline{u}(x)=\widetilde{C}|x|-\frac{1}{C^2}\sqrt{\frac{n-2}{n}}\log{|x|}+O(1)$$
as $|x|$ tends to infinity, $\psi\geq\overline{u}$ on $\R^n\backslash B_R$ if $R$ is chosen sufficiently large. For such an $R>R_0$ the spacelike function $\psi:\overline{B}_R\rightarrow\R$ is such that (\ref{def Cpsi}) and (\ref{pty psi est grad int}) hold. It is moreover strictly convex.

We consider a solution $u$ of (\ref{seq d problems}) with $m$ large enough so that $\overline{B}_R\subset \Omega_m.$ Let us consider the test function
$$\varphi=\frac{\eta}{C-\nu}$$
with
\begin{equation}\label{expr eta}
\eta=(u-\psi)^A,
\end{equation}
where $A>0$ is a constant to be chosen later.  The function $\varphi$ is non-negative on the compact set $\{u\geq \psi\},$ vanishes on $\{u= \psi\},$ and thus reaches a maximum at a point $x_0$ belonging to $\{u>\psi\}.$ The aim is to prove 
\begin{equation}\label{required int est x0}
\nu(x_0)\leq C'
\end{equation}
for some controlled constant $C'<C.$ Such a bound will indeed imply that $\varphi(x_0)\leq c$ for some other controlled constant $c,$ and thus that $\varphi\leq c$ on $K;$ since $\inf_K\eta\geq \delta_0^A$, this will in turn imply that 
$$\frac{1}{C-\nu}\leq \frac{c}{\delta_0^A}\hspace{.3cm}\mbox{on}\hspace{.3cm}K,$$
and thus will give an estimate of the form (\ref{required int est nu}). We thus focus on the obtention of (\ref{required int est x0}). We keep the notations introduced in Section \ref{subsection max pple gradient}. Since $\varphi$ reaches its maximum at  $x_0,$ we have \begin{equation}\label{maximum conditions grad int}
(\log\varphi)_i=0\hspace{.5cm} \mbox{and}\hspace{.5cm}F^{ij}(\log\varphi)_{ij}\leq 0
\end{equation} 
at that point. Since
$$(\log\varphi)_i=\frac{\eta_i}{\eta}+\frac{\nu_i}{C-\nu}$$
the first condition reads
\begin{equation}\label{extr phi grad int}
\eta_i=-\eta\frac{\nu_i}{C-\nu}.
\end{equation}
Using this equality, we further compute
$$(\log\varphi)_{ij}=\frac{\eta_{ij}}{\eta}-\frac{\eta_i\eta_j}{\eta^2}+\frac{\nu_{ij}}{C-\nu}+\frac{\nu_i\nu_j}{(C-\nu)^2}=\frac{\eta_{ij}}{\eta}+\frac{\nu_{ij}}{C-\nu},$$
and the second condition in (\ref{maximum conditions grad int}) reads
\begin{equation}\label{max phi grad int}
F^{ij}\nu_{ij}+(C-\nu)\frac{1}{\eta}F^{ij}\eta_{ij}\leq 0.
\end{equation}
By (\ref{Fijnuij}) (with $\sigma=1$) and since $F^{ij}h_j^kh_{ki}\geq 0,$ we have
$$F^{ij}\nu_{ij}\geq t^k\nu_k=-\frac{t^k\eta_k}{\eta}(C-\nu).$$
Inequality (\ref{max phi grad int}) thus implies
\begin{equation}\label{max phi grad int p}
F^{ij}\eta_{ij}\leq t^k\eta_k,
\end{equation}
and, since $\nu$ is bounded by $C,$ 
\begin{equation}\label{max phi grad int p 2}
F^{ij}\eta_{ij}\leq cA(u-\psi)^{A-1}.
\end{equation}
Here and below, we use the letter $c$ to denote a generic controlled constant (i.e. a controlled constant which may change during the computations). We now estimate the left-hand side of (\ref{max phi grad int p 2}). Let us take an orthonormal basis $(e_1,e_2,\ldots,e_n)$ formed by principal directions of the graph of $u$ at $x_0.$ Using (\ref{expr eta}), (\ref{extr phi grad int}) and the fact that $\nu_i=\lambda_iu_i$ (by (\ref{nui})) we obtain
\begin{equation}\label{extr phi explicit}
A\frac{u_i-\psi_i}{u-\psi}=-\frac{\lambda_iu_i}{C-\nu}.
\end{equation}
Let us consider
$$J=\{i:\ (1-\alpha)u_i^2>\psi_iu_i\}\hspace{.5cm}\mbox{and}\hspace{.5cm}J'=\{i:\ (1-\alpha)u_i^2\leq \psi_iu_i\}.$$
where $\alpha>0$ is a small constant such that
\begin{equation}\label{cond alpha}
1-\alpha>\sup_{\overline{B}_R}|D\psi|
\end{equation}
that will be chosen later. We may assume that $J$ is not empty, since on the contrary we would have
$$(1-\alpha)|u_i|\leq |\psi_i|,\hspace{.5cm} \forall i=1,\ldots,n,$$
which would imply (recall (\ref{nu grad u}))
$$\nu^2-1=|\nabla u|^2\leq\frac{1}{(1-\alpha)^2}|\nabla\psi|^2\leq \frac{1}{(1-\alpha)^2}\nu^2|D\psi|^2$$
and thus
$$\nu^2\leq\frac{1}{1-\frac{1}{(1-\alpha)^2}|D\psi|^2}\leq \frac{1}{1-\frac{1}{(1-\alpha)^2}\sup_{\overline{B}_R}|D\psi|^2},$$
i.e., recalling (\ref{def Cpsi}), the required estimate (\ref{required int est x0}) if $\alpha$ is chosen sufficiently small.
\\Using (\ref{extr phi explicit}), we obtain that, for all $i\in J,$
\begin{equation}\label{lambdai negative}
\lambda_i=-A\frac{(C-\nu)}{u-\psi}\left(1-\frac{\psi_i}{u_i}\right)<-A\frac{(C-\nu)}{u-\psi}\alpha
\end{equation}
is negative. We compute
\begin{eqnarray}
F^{ij}\eta_{ij}&=&A(u-\psi)^{A-1}\left(F^{ij}u_{ij}-F^{ij}\psi_{ij}\right)\label{Fijetaij}\\
&&+A(A-1)(u-\psi)^{A-2}F^{ij}(u_i-\psi_i)(u_j-\psi_j).\nonumber
\end{eqnarray}
Since $u_{ij}=\nu h_{ij},$ we have
\begin{equation}\label{Fijuij}
F^{ij}u_{ij}=\nu F\geq 0.
\end{equation}
To estimate the term $F^{ij}\psi_{ij}$ we will use the formula
$$\sum_i\sigma_{1,i}\psi_{ii}=\sigma_2\langle D\psi,Du\rangle \nu+\sum_i\sigma_{1,i}\sum_{\alpha,\beta}\frac{\partial^2\psi}{\partial x_{\alpha}\partial x_{\beta}}\langle e^0_{\alpha},e_i\rangle\langle e^0_{\beta},e_i\rangle$$
where $\sigma_{1,i}=\sum_{k\neq i}\lambda_k$ and $e^0_{\alpha},$ $\alpha=1,\ldots,n$ denotes the canonical basis of $\R^n.$ A proof may be found in \cite{U}. Since $\nu$ is bounded by $C$ and since $|\langle e^0_{\alpha},e_i\rangle|\leq \nu$ and the derivatives of $\psi$ are under control, it implies that
\begin{equation}\label{est psiii inter}
\sum_i\sigma_{1,i}\psi_{ii}\leq c(\sigma_2+\sigma_1).
\end{equation}
Now we may assume that $\sigma_2\leq 1$ (since on the contrary the estimate (\ref{required int est x0}) would be a direct consequence of the partial differential equation (\ref{seq d problems})), which implies by the Mac-Laurin inequality (\ref{mac laurin}) that
$$\sigma_2\leq\sqrt{\sigma_2}\leq \sqrt{\frac{n-1}{2n}}\sigma_1.$$
This inequality together with (\ref{est psiii inter}) implies the estimate
\begin{equation}\label{Fijpsiij}
F^{ij}\psi_{ij}\leq c\sigma_2^{-\frac{1}{2}}\sigma_1
\end{equation}
for a controlled constant $c.$ So (\ref{Fijetaij}), (\ref{Fijuij}) and (\ref{Fijpsiij}) imply that
\begin{equation}\label{est Fijetaij}
F^{ij}\eta_{ij}\geq -A(u-\psi)^{A-1}c\sigma_2^{-\frac{1}{2}}\sigma_1+\frac{A(A-1)}{2}(u-\psi)^{A-2}\sigma_2^{-\frac{1}{2}}\sum_i\sigma_{1,i}(u_i-\psi_i)^2.
\end{equation}
Inequalities (\ref{max phi grad int p 2}) and (\ref{est Fijetaij}) yield
$$(A-1)\sum_i\sigma_{1,i}(u_i-\psi_i)^2\leq c\left(\sigma_1+\sigma_2^{\frac{1}{2}}\right)(u-\psi)$$
and thus, since $\sigma_2^{\frac{1}{2}}\leq c\sigma_1$ and $u-\psi$ is bounded,
$$(A-1)\sum_i\sigma_{1,i}(u_i-\psi_i)^2\leq c\sigma_1$$
for a further controlled constant $c.$ For $i\in J$ we have $(u_i-\psi_i)^2\geq u_i^2\alpha^2$ and $\sigma_{1,i}\geq\sigma_1$ (since then $\lambda_i\leq 0$ by (\ref{lambdai negative})), and thus
\begin{equation}\label{ineq i in J}
(A-1)\alpha^2\sum_{i\in J}u_i^2\leq c.
\end{equation}
Note also that $(1-\alpha)|u_i|\leq|\psi_i|$ for all $i\in J',$ which gives
\begin{equation}\label{ineq i in Jp}
\sum_{i\in J'}u_i^2\leq\frac{1}{(1-\alpha)^2}\sum_{i\in J'}\psi_i^2\leq\frac{1}{(1-\alpha)^2}|\nabla\psi|^2\leq\nu^2\frac{1}{(1-\alpha)^2}|D\psi|^2.
\end{equation}
From (\ref{ineq i in J}) and (\ref{ineq i in Jp}) we deduce
$$|\nabla u|^2\leq \frac{c}{(A-1)\alpha^2}+\nu^2\frac{1}{(1-\alpha)^2}|D\psi|^2,$$
and since $|\nabla u|^2 =\nu^2-1$ that
\begin{equation}\label{estim nu int interm} 
\nu^2\leq\left(1+ \frac{c}{(A-1)\alpha^2}\right)\ \frac{1}{1-\frac{1}{(1-\alpha)^2}|D\psi|^2}.
\end{equation}
Let $C',C''$ be constants such that
$$C_{\psi}:=\frac{1}{\sqrt{1-\sup_{\overline{B}_R}|D\psi|^2}}<C''<C'<C.$$
Choosing $\alpha=\alpha(C'')$ sufficiently small so that
$$\frac{1}{\sqrt{1-\frac{1}{(1-\alpha)^2}\sup_{\overline{B}_R}|D\psi|^2}}<C'',$$
we deduce from (\ref{estim nu int interm}) that
$$\nu^2\leq \left(1+ \frac{c}{(A-1)\alpha^2}\right)C''^2.$$
We afterwards choose $A=A(\alpha,c,C',C'')$ sufficiently large so that 
$$\left(1+ \frac{c}{(A-1)\alpha^2}\right)C''^2\leq C'^2.$$
This implies $\nu\leq C',$ which is the required estimate (\ref{required int est x0}), and the interior $C^1$ estimate (\ref{required int est nu}) follows.

\subsubsection{The interior $C^2$ estimate}
Here again, the estimate relies on the $C^2$ estimate of Urbas \cite{U}. We need first to construct an auxiliary smooth spacelike function $\psi_2:\overline{B}_R\rightarrow\R$ defined on a large ball $\overline{B}_R$ containing $K,$ strictly convex and such that 
\begin{equation}\label{cond psi int C2 1}
\psi_2\geq \overline{u}+1\hspace{.5cm} \mbox{on}\hspace{.3cm} K
\end{equation} 
and 
\begin{equation}
\label{cond psi int C2 2}\psi_2<\underline{u}\hspace{.5cm}\mbox{on}\hspace{.3cm}\partial B_R.
\end{equation}
Let us fix $R_0$ such that $K\subset B_{R_0}$ and a constant $C'$ smaller than $C,$ and consider, for $\widetilde{C'}=\sqrt{1-1/{C'^2}},$ the function
$$\psi_2(x)=A+\sqrt{1+\widetilde{C'}^2|x|^2}$$
on $\R^n,$ where $A$ is a large constant such that (\ref{cond psi int C2 1}) holds. Since
$$\psi_2(x)\sim\widetilde{C'}|x|\hspace{.5cm}\mbox{and}\hspace{.5cm}\underline{u}(x)\sim\widetilde{C}|x|$$
as $|x|$ tends to infinity, with $\widetilde{C'}<\widetilde{C},$ (\ref{cond psi int C2 2}) also holds if $R$ is sufficiently large. We then follow the lines of the $C^2$ estimate in \cite{U}: we set 
$$\eta:=\psi_2-u,$$
and apply the maximum principle to the function $\widetilde{W}$ introduced in (\ref{def W tilde}), with that new definition of $\eta,$ on the set $\{\psi_2\geq u\}$ (which is compact by (\ref{cond psi int C2 2})). The estimate in Lemma \ref{lem est W tilde} permits to carry out the method in \cite{U}, without any modification: $\eta^\beta\lambda_1$ is controlled at a point where $\widetilde{W}$ reaches its maximum, which implies that $\widetilde{W}$ is bounded by a controlled constant; see Section \ref{section max pp C2} and especially Remark \ref{rem int C2} in that section. Using (\ref{cond psi int C2 1}) we deduce  an upper bound of $D^2u$ on $K$ by a controlled constant.
\appendix
\section{Auxiliary results on classical ODE's}
We gather here auxiliary results concerning the asymptotic behavior of solutions of some classical ODE's. Although these results are elementary and probably well known, and since we don't find them in the literature, we include the proofs.
\begin{prop}\label{prop 1 app 1}
1. Assume that $z:(r_0,+\infty)\rightarrow\R$ is a positive solution of 
\begin{equation}\label{eqn z z2 A0 B0}
z'=A_0z^2+B_0,
\end{equation}
with $A_0,B_0$ constants such that $A_0<0$ and $B_0>0.$ Then
$$\lim_{r\rightarrow+\infty}z(r)=\sqrt{-\frac{B_0}{A_0}}.$$
2. For all $r_0\geq 0$ and $z_0\geq 0,$ Equation (\ref{eqn z z2 A0 B0}) admits a unique solution $z:[r_0,+\infty)\rightarrow\R$ such that $z(r_0)=z_0;$ it is positive on $(r_0,+\infty).$
\end{prop}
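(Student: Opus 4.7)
The plan is to treat the equation $z' = A_0 z^2 + B_0$ as an autonomous scalar ODE and to exploit the fact that its right-hand side factors as $A_0(z-z_*)(z+z_*)$, where $z_* := \sqrt{-B_0/A_0}$ is the unique positive equilibrium. Since $A_0 < 0$, the sign of $z'$ is: strictly positive for $0 \leq z < z_*$, zero at $z = z_*$, and strictly negative for $z > z_*$. Everything will follow from this one-dimensional phase portrait plus uniqueness.

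For part 2, I would first apply the Cauchy--Lipschitz theorem to $f(z) = A_0 z^2 + B_0$ (which is locally Lipschitz) to get a unique maximal solution on some interval $[r_0, T^*)$. To upgrade to $T^* = +\infty$, I would establish an a priori $L^\infty$ bound by distinguishing three cases according to the value of $z_0$: if $z_0 = z_*$, uniqueness gives $z \equiv z_*$; if $0 \leq z_0 < z_*$, then by uniqueness $z$ cannot meet the equilibrium, so $z$ remains in $[0, z_*)$ and is strictly increasing (noting that $z$ cannot reach $0$ from the positive side because at any putative crossing point $z'(r) = B_0 > 0$); if $z_0 > z_*$, then similarly $z$ remains in $(z_*, z_0]$ and is strictly decreasing. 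In all cases the solution stays in a compact subset of $\R$, so it cannot blow up, proving $T^* = +\infty$. Positivity of $z$ on $(r_0, +\infty)$ follows from the bounds in each case, with the case $z_0 = 0$ handled by noting that $z'(r_0) = B_0 > 0$ so $z$ is positive immediately past $r_0$ and cannot return to $0$ by the same crossing argument.

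For part 1, let $z : (r_0, +\infty) \to \R$ be a positive solution. By uniqueness applied at any point where $z$ would meet the constant solution $z \equiv z_*$, exactly one of the following holds: $z \equiv z_*$ (done), $0 < z < z_*$ throughout, or $z > z_*$ throughout. In the second regime $z$ is strictly increasing and bounded above by $z_*$, hence converges to some limit $\ell \in (0, z_*]$. If $\ell < z_*$, then by continuity of $f$ one has $z'(r) = A_0 z(r)^2 + B_0 \to A_0 \ell^2 + B_0 > 0$ as $r \to +\infty$, which is incompatible with $z$ having a finite limit; therefore $\ell = z_*$. The third regime is symmetric: $z$ is strictly decreasing, bounded below by $z_*$, converges to some $\ell \geq z_*$, and $\ell > z_*$ would force $z'(r) \to A_0 \ell^2 + B_0 < 0$, a similar contradiction; hence $\ell = z_*$.

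This argument is routine; the only mildly delicate point is the barrier-crossing logic (the solution cannot cross $z_*$ or $0$), which rests on uniqueness for the ODE and on the observation that $f(0) = B_0 > 0$. The convergence step itself is the standard \emph{monotone bounded solution of an autonomous ODE must tend to an equilibrium} principle, which I would make explicit through the computation of the limit of $z'$ as above.
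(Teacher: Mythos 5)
Your proof is correct, but it follows a genuinely different route from the paper. The paper treats \eqref{eqn z z2 A0 B0} as a separable equation and integrates it explicitly: separation of variables and partial fractions give the implicit relation \eqref{eqn prop z implicite}, whose right-hand side decays exponentially, yielding the limit $\sqrt{-B_0/A_0}$ for part 1, and the explicit formula \eqref{eqn prop z explicite} from which global existence, uniqueness and positivity in part 2 are read off directly. You instead avoid any integration and argue qualitatively on the phase line: Cauchy--Lipschitz uniqueness prevents the solution from crossing the equilibrium $z_*=\sqrt{-B_0/A_0}$ (or the level $0$, where $z'=B_0>0$), the resulting trapping in $[0,z_*)$ or $(z_*,z_0]$ gives boundedness and hence global existence, and monotonicity plus boundedness force convergence to a limit $\ell$ which must be the equilibrium, since otherwise $z'$ would tend to a nonzero constant, contradicting the existence of a finite limit. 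Both arguments are complete; yours is more robust and would apply verbatim to any autonomous equation $z'=f(z)$ with a locally Lipschitz $f$ having a single positive stable zero, whereas the paper's explicit computation gives slightly more information (an exponential rate of convergence to $z_*$), which is not actually needed anywhere, since Proposition \ref{prop 2 app 1} only uses the limit and the global existence statement.
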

\begin{proof}
1. Assuming that $z\neq\sqrt{-\frac{B_0}{A_0}}$ (on the contrary the result is obvious), we have
$$dr=\frac{dz}{A_0z^2+B_0}=\frac{dz}{2\sqrt{-\frac{B_0}{A_0}}A_0}\left\{\frac{1}{z-\sqrt{-\frac{B_0}{A_0}}}-\frac{1}{z+\sqrt{-\frac{B_0}{A_0}}}\right\}$$
and thus
\begin{equation}\label{eqn prop z implicite}
\left|\frac{z-\sqrt{-\frac{B_0}{A_0}}}{z+\sqrt{-\frac{B_0}{A_0}}}\right|=C\exp\left(2\sqrt{-\frac{B_0}{A_0}}A_0(r-r_0)\right)
\end{equation}
for some constant $C>0.$ Letting $r$ tend to $+\infty,$ the right-hand term tends to 0, and thus
$$\frac{z-\sqrt{-\frac{B_0}{A_0}}}{z+\sqrt{-\frac{B_0}{A_0}}}\rightarrow_{r\rightarrow+\infty} 0,$$
which gives the result.
\\2. Equation (\ref{eqn prop z implicite}) yields
\begin{equation}\label{eqn prop z explicite}
z=-\sqrt{-\frac{B_0}{A_0}}+\frac{2\sqrt{-\frac{B_0}{A_0}}}{1-C\exp\left(2\sqrt{-\frac{B_0}{A_0}}A_0(r-r_0)\right)}
\end{equation}
for some constant $C\in\R.$ Conversely, (\ref{eqn prop z explicite}) with
$$C=1-\frac{2\sqrt{-\frac{B_0}{A_0}}}{z_0+\sqrt{-\frac{B_0}{A_0}}}$$
defines a solution $z:[r_0,+\infty)\rightarrow\R$ of (\ref{eqn z z2 A0 B0}) such that $z(r_0)=z_0.$ Since $z$ is increasing if $z_0\in[0,\sqrt{-\frac{B_0}{A_0}})$ (this indeed corresponds to $C\in[-1,0)$), decreasing if $z_0>\sqrt{-\frac{B_0}{A_0}}$ (this corresponds to $C\in (0,1)$) and constant if $z_0=\sqrt{-\frac{B_0}{A_0}}$ (i.e. for $C=0$), it follows from the positive sign of the initial condition $z_0$ or of the limit at infinity that it is positive on $(r_0,+\infty).$
\end{proof}
\begin{prop} \label{prop 2 app 1}
Assume that $z:(0,+\infty)\rightarrow\R$ is a positive solution of 
$$z'=Az^2+B$$
where $A,B:(0,+\infty)\rightarrow\R$ are continuous functions such that 
$$A(r)\rightarrow_{r\rightarrow+\infty} A_0<0\hspace{.5cm}\mbox{and}\hspace{.5cm}B(r)\rightarrow_{r\rightarrow+\infty} B_0>0.$$
Then 
$$\lim_{r\rightarrow+\infty}z(r)=\sqrt{-\frac{B_0}{A_0}}.$$
\end{prop}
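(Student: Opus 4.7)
The plan is to sandwich $z$ between solutions of two constant-coefficient ODEs covered by Proposition \ref{prop 1 app 1}. Fix $\varepsilon>0$ small enough that $A_0+\varepsilon<0$ and $B_0-\varepsilon>0$, and choose $R_\varepsilon>0$ such that $|A(r)-A_0|\le\varepsilon$ and $|B(r)-B_0|\le\varepsilon$ for every $r\ge R_\varepsilon$. Let $\bar z,\underline z:[R_\varepsilon,+\infty)\to\R$ denote the solutions of
$$\bar z'=(A_0+\varepsilon)\bar z^2+(B_0+\varepsilon)\quad\mbox{and}\quad\underline z'=(A_0-\varepsilon)\underline z^2+(B_0-\varepsilon),$$
both with initial value $z(R_\varepsilon)>0$ at $R_\varepsilon$; their existence on $[R_\varepsilon,+\infty)$ is granted by the second assertion of Proposition \ref{prop 1 app 1}.

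The central step is the comparison $\underline z\le z\le\bar z$ on $[R_\varepsilon,+\infty)$. Since $z\ge 0$ on this interval, the choice of $R_\varepsilon$ gives
$$(A_0-\varepsilon)z^2+(B_0-\varepsilon)\le z'\le(A_0+\varepsilon)z^2+(B_0+\varepsilon),$$
so $z$ is simultaneously a sub- and super-solution of the two auxiliary equations, with matching initial data. To handle the non-strict nature of these inequalities I would introduce $\bar z_\delta$ solving the same equation as $\bar z$ but with right-hand side increased by $\delta>0$ and initial value $z(R_\varepsilon)+\delta$; at any would-be crossing point $r_0>R_\varepsilon$ one would have $\bar z_\delta'(r_0)-z'(r_0)\ge\delta>0$, precluding any crossing, so $\bar z_\delta>z$ throughout, and $z\le\bar z$ follows upon letting $\delta\to 0$ and invoking continuous dependence of ODE solutions on parameters. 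The symmetric argument yields $z\ge\underline z$.

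Proposition \ref{prop 1 app 1} applied to $\bar z$ and $\underline z$ then gives $\bar z(r)\to\sqrt{-(B_0+\varepsilon)/(A_0+\varepsilon)}$ and $\underline z(r)\to\sqrt{-(B_0-\varepsilon)/(A_0-\varepsilon)}$ as $r\to+\infty$. The sandwich therefore yields
$$\sqrt{-\frac{B_0-\varepsilon}{A_0-\varepsilon}}\le\liminf_{r\to+\infty}z(r)\le\limsup_{r\to+\infty}z(r)\le\sqrt{-\frac{B_0+\varepsilon}{A_0+\varepsilon}},$$
and since both extreme terms tend to $\sqrt{-B_0/A_0}$ as $\varepsilon\to 0$, the desired limit follows. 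The main obstacle is the comparison step in the middle paragraph; once it is secured, the rest is a routine combination of Proposition \ref{prop 1 app 1} and the $\varepsilon\to 0$ passage.
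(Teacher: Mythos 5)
Your proposal is correct and takes essentially the same route as the paper: sandwich $z$ between solutions of constant-coefficient Riccati equations with matching initial data at a large radius, apply Proposition \ref{prop 1 app 1} to identify their limits, and then let the perturbation of the coefficients tend to zero. The only (cosmetic) difference is that the paper fixes strict bounds $A_1<A_0<A_2<0$ and $0<B_1<B_0<B_2$, which makes the comparison at a first crossing immediate, whereas your non-strict bounds $A_0\pm\varepsilon$, $B_0\pm\varepsilon$ require the additional $\delta$-perturbation argument — correct, but an avoidable extra step.
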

\begin{proof}
Let us fix constants $A_1,$ $A_2,$ $B_1,$ $B_2$ such that
$$A_1<A_0<A_2<0\hspace{.5cm}\mbox{and}\hspace{.5cm}0<B_1<B_0<B_2.$$
For $r\geq r_0$ sufficiently large,
$$A_1z^2+B_1<z'<A_2z^2+B_2.$$
Let us consider $z_1$ and $z_2$ solutions of
$$z_1'=A_1z_1^2+B_1\hspace{.5cm}\mbox{and}\hspace{.5cm}z_2'=A_2z_2^2+B_2$$
such that
$$z_1(r_0)=z_2(r_0)=z(r_0)$$
(by Proposition \ref{prop 1 app 1}, 2.). By comparison we have 
$$z_1<z<z_2\hspace{.5cm}\mbox{in}\hspace{.5cm}(r_0,+\infty)$$
and thus, by Proposition \ref{prop 1 app 1}, 1.,
$$\liminf z\geq\liminf z_1=\sqrt{-\frac{B_1}{A_1}}$$
and
$$\limsup z\leq\limsup z_2=\sqrt{-\frac{B_2}{A_2}}.$$
We finally take the limits $A_1,A_2\rightarrow A_0$ and $B_1,B_2\rightarrow B_0$ to obtain the result.
\end{proof}
\begin{prop}\label{prop 3 app 1}
If $y:[r_0,+\infty)\rightarrow\R$ is a solution of
$$y'+ay=b$$
where $a,b:[r_0,+\infty)\rightarrow\R$ are continuous functions such that   
$$0<a_0\leq a\hspace{.5cm}\mbox{and}\hspace{.5cm}|b|\leq\|b\|_{\infty}<+\infty$$
for some constant $a_0,$ then $y$ is bounded on $[r_0,+\infty):$
$$\|y\|_{\infty}\leq C(y(r_0),a_0,\|b\|_{\infty}).$$
\end{prop}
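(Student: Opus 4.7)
The plan is to use the integrating factor method, which for a first-order linear ODE gives an explicit formula for $y$, and then to exploit the sign hypothesis $a \geq a_0 > 0$ to control the resulting expression.

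First I would introduce the cumulated coefficient $A(r) := \int_{r_0}^r a(s)\,ds$, so that $(e^{A} y)' = e^{A} b$. Integrating from $r_0$ to $r$ then yields the representation
\begin{equation*}
y(r) = y(r_0)\,e^{-A(r)} + \int_{r_0}^r e^{-(A(r)-A(s))}\,b(s)\,ds.
\end{equation*}
The two terms are then estimated separately. Because $a \geq a_0 > 0$, one has $A(r) \geq a_0 (r-r_0)$ and, for $r_0 \leq s \leq r$, $A(r) - A(s) = \int_s^r a(t)\,dt \geq a_0(r-s)$.

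Applied to the representation this gives
\begin{equation*}
|y(r)| \leq |y(r_0)|\,e^{-a_0(r-r_0)} + \|b\|_{\infty}\int_{r_0}^r e^{-a_0(r-s)}\,ds \leq |y(r_0)| + \frac{\|b\|_{\infty}}{a_0},
\end{equation*}
where the last inequality uses $\int_{r_0}^r e^{-a_0(r-s)}\,ds = (1-e^{-a_0(r-r_0)})/a_0 \leq 1/a_0$. Taking the supremum in $r$ yields the claimed bound $\|y\|_\infty \leq |y(r_0)| + \|b\|_\infty/a_0$, with a constant that depends only on $y(r_0)$, $a_0$ and $\|b\|_\infty$.

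There is no real obstacle here: the whole argument is a direct consequence of the integrating-factor formula together with the uniform lower bound on $a$, which forces the ``homogeneous part'' to decay exponentially and the ``forced part'' to stay integrable. The only point worth noting is that it is crucial to work with the full coefficient $A(r)$ (rather than $a_0 r$) when writing down the representation formula, so that the bound $A(r) - A(s) \geq a_0(r-s)$ can be applied inside the integral without further hypotheses on $a$.
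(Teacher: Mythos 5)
Your proof is correct and follows essentially the same route as the paper: the integrating-factor representation with $A(r)=\int_{r_0}^r a$, the pointwise bound $A(r)-A(s)\geq a_0(r-s)$, and the resulting estimate $|y(r)|\leq |y(r_0)|+\|b\|_\infty/a_0$. No issues.
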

\begin{proof}
Setting, for $r\geq r_0,$
$$A(r)=\int_{r_0}^ra(t)dt$$
we have
$$y(r)=\left(y(r_0)+\int_{r_0}^rb(u)e^{A(u)}du\right)e^{-A(r)}.$$
We have $A(r)\geq 0$ and
$$A(u)-A(r)\leq-a_0(r-u)$$
if $r_0\leq u\leq r,$ which implies that
$$|y(r)|\leq |y(r_0)|+\|b\|_{\infty}\int_{r_0}^re^{-a_0(r-u)}du\leq  |y(r_0)|+\frac{\|b\|_{\infty}}{a_0}.$$
\end{proof}
\noindent\textbf{Acknowledgments:}  The author thanks the referee for a simplification in the proof of Lemma \ref{lem v eps}. He also thanks the project PAPIIT-UNAM IA106218 for support.

\end{document}